\numberwithin{equation}{section}
\renewcommand{\@secnumfont}{\bfseries\normalsize}
\xpatchcmd{\@thm}{.}{}{}{}
\def\calA{{\mathcal A}}
\def\calB{{\mathcal B}}
\def\calE{{\mathcal E}}
\def\calI{{\mathcal I}}
\def\calK{{\mathcal K}}
\def\calL{{\mathcal L}}
\def\calM{{\mathcal M}}
\def\calP{{\mathcal P}}
\def\calV{{\mathcal V}}
\newcommand\sub[2]{{#1}_{\scriptscriptstyle{#2}}}
\newcommand{\R}{{\mathbb R}}
\newcommand{\Q}{{\mathbb Q}}
\newcommand{\C}{{\mathbb C}}
\newcommand{\T}{{\mathbb T}}
\newcommand{\Z}{{\mathbb Z}}
\newcommand{\N}{{\mathbb N}}
\newcommand{\Ni}[1]{\sub{N}{{\rm F},#1}} 
\newcommand{\INF}{\mathcal{I}_{\NF}}
\newcommand{\nuNi}[1]{\nu_{#1}} 
\newcommand{\muNi}[1]{\mu_{#1}} 
\def\Ntot{{\sub{N}{{\rm D}}}}
\def\NF{{\sub{N}{{\rm F}}}}
\def\ee{\mathrm{e}}
\def\ii{\mathrm{i}}
\def\hatrho{\hat{\rho}}
\def\st{\ |\ } 
\newcommand{\ep}{\varepsilon}
\newcommand{\norm}[1]{{|{#1}|}}  
\newcommand{\NORM}[1]{{\|{#1}\|}} 
\def\Dif{{\rm D}}
\def\dif{{\rm d}}
 \newcommand\reallywidetilde[1]{\ThisStyle{%
  \setbox0=\hbox{$\SavedStyle#1$}%
  \stackengine{-.1\LMpt}{$\SavedStyle#1$}{%
    \stretchto{\scaleto{\SavedStyle\mkern.2mu\AC}{.5150\wd0}}{.6\ht0}%
  }{O}{c}{F}{T}{S}%
}}
\def\re{\mathrm{Re}}
\def\im{\mathrm{Im}}
\def\Id{\mathrm{Id}}
\newtheorem{definition}{Definition}[section]
\newtheorem{theorem}[definition]{Theorem}
\newtheorem{lemma}[definition]{Lemma}
\newtheorem{corollary}[definition]{Corollary}
\newtheorem{remark}[definition]{Remark}
\newcommand{\thmref}[1]{Theorem~\ref{#1}}
\newcommand{\lemref}[1]{Lemma~\ref{#1}}
\newcommand{\corref}[1]{Corollary~\ref{#1}}
\newcommand{\secref}[1]{Section~\ref{#1}}
\newcommand{\remref}[1]{Remark~\ref{#1}}
\renewenvironment{abstract}
 {\small\quotation\noindent\textbf{\abstractname}\quad}
 {\endquotation}
\begin{document}

\title[Computation of invariant tori]{
Computer-Assisted Proofs of Existence of \\ Fiberwise Hyperbolic Invariant Tori in \\ Quasi-periodic Systems via Fourier Methods}

\author[Alex Haro \and Eric Sandin Vidal]{Alex Haro$^{\mbox{\scriptsize *1,2}}$ \and Eric Sandin Vidal$^{\mbox{\scriptsize 3}}$}
\address{$^{\mbox{\scriptsize 1}}$ \small Departament de Matem\`atiques i Inform\`atica, Universitat de Barcelona, Gran Via 585, 08007 Barcelona, Spain}
\address{$^{\mbox{\scriptsize 2}}$ \small Centre de Recerca Matem\`atica, Edifici C, Campus Bellaterra, 08193 Bellaterra, Spain}
\address{$^{\mbox{\scriptsize 3}}$ \small Department of Mathematics, Vrije Universiteit Amsterdam - Faculty of Science, De Boelelaan 1111, 1081 HV Amsterdam, The Netherlands}
\email{alex@maia.ub.es, e.sandin.vidal@vu.nl}

\thanks{%
    \hspace*{-\parindent}\textbf{Keywords} invariant tori, quasi-periodically forced system, normal hyperbolicity, Fourier methods, computer-assisted proof\\
    \textbf{MSC2020} 37C55, 37C60, 37D10, 65G30, 42A16 \\
    A.H. has been supported by the Spanish grant PID2021-125535NB-I00 (MCIU/AEI/FEDER, UE), and by the Spanish State Research Agency, through the Severo Ochoa and Mar\'ia de Maeztu Program for Centers and Units of Excellence in R\&D (CEX2020-001084-M)\\
    *Corresponding author: Alex Haro
}

\date{}

\maketitle

\begin{abstract}
The goal of this paper is to provide a methodology to prove the existence of (fiberwise hyperbolic) real-analytic invariant tori in real-analytic quasi-periodic skew-product dynamical systems that present nearly-invariant tori of the same characteristics. The methodology was based on the application of a Newton-Kantorovich theorem whose hypotheses were tested using Fourier analysis methods for a numerical approximation of the parameterization of an invariant torus.
\end{abstract}

\section{Introduction}

Dynamical systems subject to a quasi-periodic forcing are of great significance beyond mere theoretical interest. Many works in physics, engineering, and other areas of science greatly depend on a deep understanding of systems undergoing such motions either in a direct or indirect way (see \cite{agrawal2020universality,
feudel1997phase, prasad2001strange} and the references therein, to name a few). This quasi-periodic motion forced upon the system acts as a perturbation, leading us to the natural question of existence of invariant objects robust to such perturbations, a common practice in dynamical systems. As it is well-known, this persistence relates to the property of normal hyperbolicity \cite{mane1978persistent}.

In this context, models are described as skew-product systems (or bundle maps) over a torus in which the dynamics is given by an ergodic rotation, and the response quasi-periodic solutions are geometrically described as invariant tori that are graphs over the torus. What we present here is a new methodology that takes advantage of the quasi-periodic nature of the setting by the use of the Fourier methods in \cite{FiguerasHaroLuque2017}. For such purpose, we will consider the simplest setting, in which the torus is {\em fiberwise} hyperbolic, since hyperbolicity allows for a seamless application of a Newton-Kantorovich argument, allowing us to discard more complicated methods such as KAM or Nash-Moser (as in \cite{FiguerasHaroLuque2017,FiguerasHaroLuque2020-Rotations}).

In many examples in the literature, such systems are real-analytic (see the references above). A natural question is, then, if the
response tori to the quasi-periodic forcing inherits such a regularity. Notice that it is well-known that {\em normally} hyperbolic invariant tori are, in general, only finitely differentiable even if the system is infinitely differentiable or analytic, with the regularity being related with the rates of contraction/expansion of the stable/unstable bundles compared with such rates on the tangent bundle \cite{fenichel1974asymptotic, fenichel1977asymptotic, hirschPS}. However, thanks to the special skew-product form of the quasi-periodic systems, and the fact that the internal motion on the torus is a rotation, the fiberwise hyperbolic invariant tori are as smooth as the systems \cite{harodelallave2006parameterization}.

With this context in mind, we turn the focus of this work toward real-analytic skew-product dynamical systems with quasi-periodic forcing, for which we are interested in proving the existence of invariant tori close to approximate invariant tori (produced, possibly, by numerical methods). Following the methodology of a validation theorem as in \cite{FiguerasHaro2012, DelallaveHaro2006} (or also called a radii polynomial approach, see \cite{van2018continuation, lessard2017computer}), applied over real-analytic objects, we will use  Fourier analysis to rigorously verify the hypotheses of a Newton-Kantorovich theorem. The Fourier methods in \cite{FiguerasHaroLuque2017} avoid standard convolution techniques and can be applied to problems with non-polynomial non-linearities (see also \cite{FiguerasHaroLuque2020-Rotations,linroth2023computer-arxiv} for other applications).
For another approach, based on the so-called Fourier models (that include a truncated Fourier series and a bound of the tails), see \cite{FiguerasHaro2012}. With such goal in mind, computations will be implemented with interval arithmetics on a computer, which will then validate our object of interest. Such procedure is  called a computer-assisted proof.

Our approach in this study involves the application of several techniques, including functional equations, Newton-Kantorovich theorems, and Fourier series analysis, to effectively manage the numerical aspects inherent to our subject matter. This combination of methods allows us to approximate and manipulate our data in a manner that is both accurate and adaptable. Importantly, the flexibility of this methodology means that it can be easily tailored to address a wide range of problems sharing similar characteristics (such as the validation of periodic orbits or periodic solutions of ODE systems). By employing these tools, we not only address the specific challenges encountered in our research but also lay the groundwork for potential applications in various other problem-solving scenarios.

The work will be divided in several parts, starting with the setting of the problem, followed by the validation theorem, which will show how to prove the existence of an invariant torus, a section on Fourier analysis and how it will help in our endeavor, and finally, a section explaining how the computer-assisted proof itself is carried out  for fiberwise invariant tori in a quasi-periodically forced standard map \cite{FiguerasHaro2012}.

\section{Setting}
\label{sec:setting}

\subsection{Definitions}
Following standard practice, let $\T^d=(\R / \Z)^d$ be the real torus, and $\T_\C^d = \T^d + \ii\R^d$ be the complex torus. We denote a \textit{complex strip} (in $\T_\C^d$) of width $\rho>0$ by
\begin{equation*}
    \T_\rho^d = \{ \theta \in \T_\C^d \st \im |\theta_i|<\rho \, , \, i=1, \ldots , d \},
\end{equation*}
and by $\bar\T^d_\rho$ its closure. We denote by $C^0(\bar\T^d_\rho,\C^m)$ the Banach space of continuous
functions $u: \bar\T^d_\rho\to\C^m$ endowed with the norm \smallskip
\begin{equation*}
    \NORM{u}_\rho = \sup_{\theta \in \T_\rho^d} \norm{u(\theta)},
\end{equation*} 
where $\norm{\cdot}$ is the supremum norm in $\C^m$. We denote by $A(\bar\T^d_\rho,\C^m)$ the  Banach space of continuous functions $u:\bar\T_{\rho}^{d}\to\C^m$, holomorphic on $\T_{\rho}^{d}$ and such that $u(\T^{d})\subset\R^m$ (so $u$ is real-analytic), endowed with the supremum norm. Notice that $A(\bar\T^d_\rho,\C^m) \subset C^0(\bar\T^d_\rho,\C^m)$, and that the inclusion is closed.

 Let $\calA\subset \R^n \times \T^d$ be an annulus, that is, an open set homotopic to $\calV \times \T^d$, where $\calV\subset\R^n$ is open.
Let $\calB \subset \C^n \times \T^d_\C$ be a complex neighborhood of the annulus $\calA$. Such $\calA$ will be the  real domain of the skew-product dynamical system defined in the following section, and $\calB$ will be the domain of a complex extension of such dynamical system.

\subsection{Skew-Product Dynamical Systems}
In this paper, we consider real-analytic \textit{skew-product} dynamical systems
\begin{equation*}
\begin{array}{llcl}
\hat{F} = (F,f):& \multicolumn{1}{c}{\calB} & \longrightarrow & \C^n \times \T_\C^d\\
& (z, \theta) & \longrightarrow & (F(z, \theta), f(\theta))\\
\end{array}\ ,
\end{equation*}
where $\pi_{\T_\C^d} (\calB) = \T_r^d$ for a given $r>0$, and $F\colon \calB \to \C^n$ and $f \colon \T^d_r \to \T_\C^d$ are real-analytic, and thus $F(\calA) \subset \R^n$ and $f(\T^d)=\T^d$. More particularly, we will work with skew-products defined over rotations with angle $\omega$, that is, $f(\theta) = R_\omega (\theta) = \theta + \omega$. In such systems, if $\omega \in \R^d$ such that $\forall k \in \Z^d \backslash \{ 0 \}$ and $\forall p \in \Z$, $k \omega \neq p$, then $(F,R_\omega)$ is called a quasi-periodic skew-product. In any case, $R_\omega(\T^d_r) = \T^d_r$. As mentioned before, similar cases with lower regularity conditions have been extensively studied in \cite{FiguerasHaro2012, DelallaveHaro2006}.

The focus of this work are real-analytic tori, whose parameterization is given by $K \in A(\bar\T^d_\rho,\C^n)$ with $0< \rho < r$, which defines a real-analytic section of the bundle $\C^n \times \T^d_\C$, with graph $\calK_\rho= \{ (K(\theta),\theta) \st \theta\in \bar\T^d_\rho \}$. We often abuse notation and refer to $K$ as a torus, rather than a section or the parameterization of the torus $\calK_\rho$. If the map $K$ satisfies the functional equation
\begin{equation}\label{invarianceequation}
F(K(\theta-\omega), \theta - \omega) - K(\theta)  = 0,
\end{equation}
then the torus $\calK_\rho$ is invariant under $(F,R_\omega)$ and its inner dynamics is the rigid rotation by $\omega$.

The invariance equation \eqref{invarianceequation}
can be rewritten in functional terms. Let \[\calB^* = \{ K \in A(\bar\T^d_\rho,\C^n) \st \forall \theta \in \bar\T^d_\rho , (K(\theta),\theta) \in \calB \}.\]
Then, let $E \colon \calB^* \to A(\bar\T^d_\rho,\C^n)$ be the
operator defined as
\begin{equation*}
E(K)(\theta) = F(K(\theta-\omega), \theta-\omega) - K(\theta)\ ,
\end{equation*}
then $K$ is an invariant torus for $(F,R_\omega)$ if and only if
\begin{equation}\label{eq:T zero}
E(K)(\theta) = 0.
\end{equation}

The differential of \eqref{eq:T zero} is the linear operator
$\Dif E(K) : A(\bar\T^d_\rho,\C^n) \to A(\bar\T^d_\rho,\C^n)$ defined as
\begin{equation*}
\Dif E(K) \Delta(\theta) =
\Dif_z F(K(\theta-\omega), \theta-\omega)
\Delta(\theta-\omega) - \Delta(\theta)\ .
\end{equation*}
Notice that the nature of the solutions of \eqref{eq:T zero}, and hence of the bounded linear operator
$\Dif E(K)$, is
strongly related to the dynamical  properties of the
linearized dynamics around $K$.

\subsection{Transfer Operator and Hyperbolicity} \label{subsec:transfer}
The linear dynamics mentioned before is given by the vector bundle map
\begin{equation*}
\begin{array}{llcl}
(M, R_\omega): & \C^n \times \bar\T_\rho^d &
\longrightarrow & \C^n \times \bar\T_\rho^d\\
& (v, \theta) & \longrightarrow & (M(\theta) v, \theta+\omega)\\
\end{array}\ ,
\end{equation*}
where $M\colon \T_\rho^d \to \C^{n \times n}$
is the \textit{transfer matrix}
$M(\theta)= \Dif_z F(K(\theta), \theta)$.

To this linear quasi-periodic skew-product, we associate a functional object, the \textit{transfer operator} $\mathcal{M}$,
which is the bounded linear operator
$\calM: C^0(\bar\T^d_\rho,\C^n)
\to C^0(\bar\T^d_\rho,\C^n)$ defined as
\begin{equation*}
\mathcal{M}(\Delta)(\theta) = M(\theta-\omega)\Delta(\theta-\omega).
\end{equation*}
Notice that $A(\bar\T^d_\rho,\C^n)$ is invariant under the action of the transfer operator.

We say a real-analytic torus $K$ is fiberwise hyperbolic if the transfer operator $\calM$ is hyperbolic on $A(\bar\T^d_\rho,\C^n)$ (that is, its spectrum does not intersect the unit circle). This implies that $\calM - \calI$ is invertible in $A(\bar\T^d_\rho,\C^n)$.

\begin{remark}
We emphasize that if $\calM$ is hyperbolic as an operator in $C^0(\bar\T^d_\rho,\C^n)$, then it is hyperbolic as an operator in $A(\bar\T^d_\rho,\C^n)$. This follows from the analyticity of the spectral subbundles \cite{johnson1980analyticity} and the spectral inclusion \cite{canadellHaro2017}
    \begin{equation*}
        \text{Spec}(\calM , A(\bar\T^d_\rho,\C^n) ) \subset \text{Spec}(\calM , C^0(\bar\T^d_\rho,\C^n) ).
    \end{equation*}
\end{remark}

As  is well-known, hyperbolicity of $(M, R_\omega)$ relates to  the existence of stable and unstable bundles. In this work, we will assume these bundles are trivializable (something that sometimes can be obtained using double-covering tricks).

In particular, in the applications of this paper, we will consider a real-analytic torus $K$ to be fiberwise hyperbolic if there are continuous maps $P \colon \bar\T_\rho^d \to \C^{n \times n}$ and $\Lambda \colon \bar\T_\rho^d \to \C^{n \times n}$, real-analytic in $\T_\rho^d$, such that
\begin{equation*}
    P(\theta + \omega)^{-1} M(\theta) P(\theta) - \Lambda (\theta) = 0,
\end{equation*}
and
\[
\Lambda(\theta)=\begin{pmatrix}
\Lambda_s(\theta) & 0\\
0 & \Lambda_u(\theta)
\end{pmatrix}
\]
with each block $\Lambda_s(\theta) \in \C^{n_s \times n_s}$, $\Lambda_u(\theta) \in \C^{n_u \times n_u}$ satisfying $\NORM{\Lambda_s(\theta)}_\rho \leq \lambda_s < 1$ and $\NORM{(\Lambda_u(\theta))^{-1}}_\rho \leq \lambda_u < 1$ for some $\lambda_s$ and
$\lambda_u$, where the norms for matrices of real-analytic functions are naturally extended to
\begin{equation*}
\NORM{\Lambda}_{\rho} = \max_{i=1,\ldots,n} \sum_{j=1}^{n}
\NORM{\Lambda_{i,j}}_{\rho}.
\end{equation*}

\begin{remark}
    We have tailored the definition of hyperbolicity to the methods of this paper, especially regarding the reducibility to simple dynamics. In particular, the torus being hyperbolic means that under an appropriate linear change of variables (given by the map $P$), the transfer map is reducible into a block-diagonal form (given by the map $\Lambda$), with stable and unstable bundles. In such case, the first $n_s$ columns of $P$ represent the directions of the stable subbundles, and the remaining $n_u$ columns represent the directions of the unstable subbundles.
\end{remark}

\begin{remark}
From the previous constructions, one obtains the spectral gap condition
\begin{equation*}
        \text{Spec}(\calM , A(\bar\T^d_\rho,\C^n) ) \cap \{ z \in \C \st \lambda_s < |z| < \lambda_u \} = \emptyset ,
    \end{equation*}
which implies the hyperbolicity of the transfer operator in $A(\bar\T^d_\rho,\C^n)$.
\end{remark}

\begin{remark}
    In an abuse of notation, we will refer to the norm of a linear operator (like the transfer operator $\calM$) in $A(\bar\T^d_\rho,\C^n)$ as
    \begin{equation*}
        \NORM{\calM}_\rho = \NORM{\calM}_{A(\bar\T^d_\rho,\C^n)} = \NORM{M}_{\rho}.
    \end{equation*}
\end{remark}

Then, if $\calL_s$ and $\calL_u$ are the transfer operators associated with $\Lambda_s$ and $\Lambda_u$ respectively, we have
\begin{equation*}
    \NORM{(\calL_s - \calI)^{-1}}_\rho \leq \frac{1}{1-\lambda_s} \quad \text{and} \quad \NORM{(\calL_u - \calI)^{-1}}_\rho \leq \frac{\lambda_u}{1-\lambda_u} .
\end{equation*}

We emphasize that, since $\Dif E(K) = \calM - \calI$, the
hyperbolicity property of the transfer operator $\calM$ implies the
invertibility of $\calM - \calI$ and therefore of $\Dif E(K)$ and hence the applicability of Newton's method to equation \eqref{eq:T zero}. For more details on the relation between the transfer operator and the hyperbolicity of the linear dynamics, we refer the reader to \cite{canadellHaro2017, FiguerasHaro2012}.

\section{Validation Results}

In this section, we present a Newton-Kantorovich lemma, which shows existence of zeroes in differentiable maps between Banach spaces given approximate zeroes satisfying certain conditions. The main theorem will show afterward how to apply it to our case.

\subsection{A Newton-Kantorovich Lemma}

The following result is standard in the literature of computer-assisted proofs (see e.g.,  \cite{van2018continuation, FiguerasHaro2012, DelallaveHaro2006, lessard2017computer}).
Its proof is essentially the application of a contraction principle.

\begin{lemma}\label{NewtonKantorovich}
    Let $E : B_R(x_0) \subset X \to Y$ be a $C^1$ mapping where $X, Y$ are Banach spaces and $x_0 \in X$, such that $\Dif E(x_0) : X \to Y$ is bounded and invertible (with bounded inverse by the Banach isomorphism theorem). Let $\ep, \sigma$ be constants and $b : (0, R) \to \R_+$ a function such that
    \begin{enumerate}[a)]
        \item $\NORM{E(x_0)} \leq \ep$;\label{lem:cond:a}
        \item $\NORM{\Dif E (x_0)^{-1}} \leq \sigma$;\label{lem:cond:b}
        \item  $\forall r<R$, $\forall x \in \Bar{B}_r(x_0)$, $\NORM{\Dif E(x) - \Dif E(x_0)} \leq b(r) \NORM{x - x_0}$.\label{lem:cond:c}
    \end{enumerate}
    Assume also that for a certain $r_* <R$
    \begin{enumerate}[1)]
        \item $\frac{1}{2} \sigma b(r_*) r_*^2 - r_* + \sigma \ep \leq 0$;\label{cond:1}
        \item $ \sigma b(r_*) r_*<1$.\label{cond:2}
    \end{enumerate}
Then there exists a unique $x_* \in \Bar{B}_{r_*}(x_0)$ such that $E(x_*) = 0$. Moreover, $\Dif E(x_*)$ is bounded and invertible, and
\begin{equation*}
    \NORM{x_*-x_0} \leq \frac{\sigma \ep}{1-\sigma b(r_*) r_*} \ , \quad
    \NORM{\Dif E(x_*)^{-1}} \leq \frac{\sigma}{1 - \sigma b(r_*) \NORM{x_* - x_0}} \leq \frac{\sigma}{1 - \sigma b(r_*) r_*}.
\end{equation*}
\end{lemma}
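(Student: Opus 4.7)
The plan is to reduce the zero-finding problem to a fixed-point problem for the Newton-like operator
\[
N(x) \;=\; x - \Dif E(x_0)^{-1} E(x),
\]
which is well-defined on $B_R(x_0)$ by hypothesis (\ref{lem:cond:b}), and whose fixed points are exactly the zeros of $E$ (since $\Dif E(x_0)^{-1}$ is injective). I would then apply the Banach contraction principle to $N$ restricted to the closed ball $\bar{B}_{r_*}(x_0)$, with conditions (\ref{cond:1}) and (\ref{cond:2}) exactly providing invariance of the ball and the contraction constant, respectively.

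First I would establish the fundamental quadratic remainder estimate: for any $x \in \bar{B}_{r_*}(x_0)$, by writing
\[
E(x) - E(x_0) - \Dif E(x_0)(x-x_0) \;=\; \int_0^1 \bigl[\Dif E(x_0 + t(x-x_0)) - \Dif E(x_0)\bigr](x-x_0)\,\dif t,
\]
hypothesis (\ref{lem:cond:c}) and $\|x_0 + t(x-x_0) - x_0\| \le t r_*$ yield $\NORM{E(x) - E(x_0) - \Dif E(x_0)(x-x_0)} \le \tfrac12 b(r_*)\NORM{x-x_0}^2$. Combining this with (\ref{lem:cond:a})--(\ref{lem:cond:b}) gives
\[
\NORM{N(x) - x_0} \;\le\; \sigma\,\tfrac12 b(r_*) r_*^2 + \sigma\eps \;\le\; r_*,
\]
by condition (\ref{cond:1}), so $N$ preserves $\bar{B}_{r_*}(x_0)$.

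Next I would bound the Lipschitz constant: for $x,y \in \bar{B}_{r_*}(x_0)$, the same trick gives
\[
N(x) - N(y) \;=\; -\Dif E(x_0)^{-1}\!\int_0^1 \bigl[\Dif E(y+t(x-y)) - \Dif E(x_0)\bigr](x-y)\,\dif t,
\]
so $\NORM{N(x)-N(y)} \le \sigma b(r_*) r_* \NORM{x-y}$, which is a strict contraction by (\ref{cond:2}). The Banach fixed-point theorem provides a unique $x_* \in \bar{B}_{r_*}(x_0)$ with $N(x_*)=x_*$, i.e.\ $E(x_*)=0$. The first \emph{a posteriori} bound on $\NORM{x_*-x_0}$ follows by writing $x_* - x_0 = N(x_*) - N(x_0) - \Dif E(x_0)^{-1} E(x_0)$, applying the contraction estimate to the first term, using (\ref{lem:cond:a})--(\ref{lem:cond:b}) on the second, and solving for $\NORM{x_*-x_0}$.

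Finally, for invertibility of $\Dif E(x_*)$, I would factor
\[
\Dif E(x_*) \;=\; \Dif E(x_0)\Bigl(\Id + \Dif E(x_0)^{-1}\bigl[\Dif E(x_*) - \Dif E(x_0)\bigr]\Bigr),
\]
and use (\ref{lem:cond:b})--(\ref{lem:cond:c}) to bound the norm of the perturbation by $\sigma b(r_*)\NORM{x_*-x_0} \le \sigma b(r_*) r_* < 1$; a Neumann series argument then gives invertibility together with the stated bound on $\NORM{\Dif E(x_*)^{-1}}$. The only step requiring any care is the quadratic remainder estimate, which is where the Lipschitz form of hypothesis (\ref{lem:cond:c}) is genuinely used (as opposed to a mere continuity modulus); everything else is bookkeeping around the contraction principle.
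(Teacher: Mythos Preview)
Your proposal is correct and follows essentially the same route as the paper: define the quasi-Newton map $T(x)=x-\Dif E(x_0)^{-1}E(x)$, use hypothesis~(\ref{lem:cond:c}) in the integral remainder to get both the self-map estimate (via condition~(\ref{cond:1})) and the Lipschitz constant $\sigma b(r_*)r_*$ (via condition~(\ref{cond:2})), apply the contraction principle, and finish with the Neumann-series factorization of $\Dif E(x_*)$. The only cosmetic difference is that you spell out the $\tfrac12$ factor via $\|x_0+t(x-x_0)-x_0\|\le t r_*$, whereas the paper leaves that step implicit.
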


\begin{proof}
We start by defining a map $T : B_R (x_0) \to X$ by $T(x) = x - \Dif E(x_0)^{-1} E(x)$ such that the zeroes of $E$ correspond to the fixed points of $T$ (the iterates of $T$ give a quasi-Newton method for the zeroes of $E$). The standard procedure is to apply Banach's fixed-point theorem. For that, we search for conditions on $r<R$ to apply the theorem.

We can start by checking where does $T$ map the ball $\bar{B}_{r}(x_0)$, so for any $x \in \Bar{B}_r(x_0)$,
    \begin{align*}
        \|&T(x) - x_0\| \\
        &= \NORM{x - x_0 - \Dif E(x_0)^{-1} E(x)} \\
        &= \NORM{x - x_0 - \Dif E(x_0)^{-1} ( E(x)-E(x_0) +E(x_0))} \\
        &= \|(x-x_0) - \Dif E(x_0)^{-1} \int_0^1 \Dif E(x_0 + t (x-x_0)) \dif t \ (x-x_0) - \Dif E(x_0)^{-1} E(x_0)\| \\
        &= \sigma \frac{1}{2} b(r) \NORM{x-x_0}^2 + \sigma \ep \leq \sigma \frac{1}{2} b(r) r^2 + \sigma \ep
    \end{align*}
    where the $b(r) \NORM{x-x_0}$ factor is given from \ref{lem:cond:c}. Next, we can compute the Lipschitz constant $L_r$ of $T$. Let $x_1, x_2 \in \Bar{B}_r(x_0)$,
\begin{align*}
        \|&T(x_2) - T(x_1)\| \\
        &= \NORM{x_2 - x_1 - \Dif E(x_0)^{-1} ( E(x_2) - E(x_1))} \\
        & = \| -\Dif E(x_0)^{-1} \int_0^1 \big(\Dif E(x_1 + t (x_2 - x_1)) - \Dif E(x_0)\big) (x_2-x_1) \dif t \| \\
        & \leq \sigma \int_0^1 b(r) \NORM{x_1 + t(x_2 - x_1) - x_0} \dif t \|x_2 - x_1\| \\
        &\leq \sigma b(r) r \ \NORM{x_2 - x_1}
    \end{align*}
which follows by hypothesis \ref{lem:cond:c}. So, the Lipschitz constant $L_r$ of $T$ in $\Bar{B}_r(x_0)$ is $L_r=\sigma b(r) r$. Now, by hypotheses \ref{cond:1} and \ref{cond:2}, there is a $r_*<R$ such that $\|T(x) - x_0\| \leq r_*$, and hence $T(\Bar{B}_{r_*}(x_0)) \subset \Bar{B}_{r_*}(x_0)$, and $L_{r_*} <1$, which implies that $T$ is a contraction and therefore $\exists ! \ x_* \in \Bar{B}_{r_*}(x_0)$ such that $T(x_*)=x_*$ (that is, $E(x_*)=0$). Furthermore, from the Banach contraction mapping theorem we also get
\begin{equation*}
    \NORM{x_*-x_0} \leq \frac{\NORM{T(x_0)-x_0}}{1-L_{r_*}} \leq \frac{\sigma \ep}{1-\sigma b(r_*) r_*}.
\end{equation*}
As for the last result, we see
\begin{align*}
    \Dif E(x_*) &= \Dif E(x_0) + \Dif E(x_*) - \Dif E(x_0)\\
    &= \Dif E(x_0) \big( \Id + \Dif E(x_0)^{-1} ( \Dif E(x_*) - \Dif E(x_0) ) \big)\ .
\end{align*}
By hypothesis, $\| \Dif E(x_0)^{-1} ( \Dif E(x_*) - \Dif E(x_0) ) \| \leq \sigma b(r_*) r_* <1$, which means that $\Id + \Dif E(x_0)^{-1} ( \Dif E(x_*) - \Dif E(x_0) )$ is invertible and the result follows.
\end{proof}

\begin{remark}
    In \cite{van2018continuation, lessard2017computer}, condition \ref{lem:cond:c} is slightly different, which makes \ref{cond:1} and \ref{cond:2} simplified in one condition
    \begin{equation}\label{rem:radii}
        \sigma b(r_*)r_*^2 -r_* +\sigma + \ep <0.
    \end{equation}
    Notice that, from hypotheses \ref{lem:cond:a}, \ref{lem:cond:b}, and \ref{lem:cond:c}, if \eqref{rem:radii} is satisfied, then \ref{cond:1} and \ref{cond:2} are also satisfied.
\end{remark}

\begin{remark}\label{rem::radiuses}
    Notice that we can apply \lemref{NewtonKantorovich} for any $r_*\in (r_-,r_+)$ where
    \begin{align*}
            r_- &= \inf \{ r \in (0, R) \st \tfrac{1}{2} \sigma b(r) r^2 - r + \sigma \ep \leq 0 \ \textit{and} \ \sigma b(r) r < 1\}, \\
            r_+ &= \sup \{ r \in (0, R)  \st \tfrac{1}{2} \sigma b(r) r^2 - r + \sigma \ep \leq 0 \ \textit{and} \ \sigma b(r) r < 1\} .
    \end{align*}
    Hence, there is a unique solution $x_*$ of $E(x)= 0$ in the ball $B_{r_+}(x_0)$, and moreover it is in $\bar B_{r_-}(x_0)$.
\end{remark}

\subsection{Validation Theorem}
This is the a posteriori theorem we will use to perform computer-assisted proofs of existence of fiberwise hyperbolic invariant tori.

\begin{theorem}\label{thm::validation}
Assume that given a $\rho>0$, we have a real-analytic torus $K_0: \bar{\T}_\rho^d \rightarrow \C^n$. Let $D_{\rho,R} = \{ (z, \theta) \st z \in \C^n \ , \ \theta \in \bar\T_\rho^d \ , \ \norm {z - K_0(\theta)} < R \}$ for a certain $R$ and $F : D_{\rho,R} \rightarrow \C^n$ be a real-analytic map, defining a skew-product over the rotation $\omega \in \R^d$. Assume that
\begin{enumerate}[a)]
    \item $\NORM{F(K_0(\theta - \omega), \theta - \omega) - K_0(\theta)}_\rho \leq \ep $;\label{condA}
    \item $M_0(\theta) = \Dif_z F(K_0(\theta), \theta)$, the corresponding linear skew-product, is hyperbolic in $A(\bar\T^d_\rho,\C^n)$ and $\NORM{(\calM_0 - \Id)^{-1}}_{A(\bar\T^d_\rho,\C^n)} \leq \sigma$;\label{condB}
    \item $\forall r <R$ and $\forall (z, \theta) \in \Bar{D}_{\rho,r}$, $\norm{\Dif_z F(z, \theta) - \Dif_z F(K_0(\theta), \theta)} \leq b(r) \norm{z-K_0(\theta)}$.\label{condC}
\end{enumerate}
If, moreover, for a certain $r_* <R$
    \begin{enumerate}[1)]
        \item $\frac{1}{2} \sigma b(r_*) r_*^2 - r_* + \sigma \ep \leq 0$;
        \item $ \sigma b(r_*) r_*<1$,
    \end{enumerate}
then there exists a unique $K_* \in \Bar{B}_{r_*}$ such that $E(K_*) = 0$. Moreover, the cocycle given by $M_*(\theta) = \Dif_z F(K_*(\theta), \theta)$ is hyperbolic and the torus is fiberwise hyperbolic.
\end{theorem}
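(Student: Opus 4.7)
The plan is to apply the Newton--Kantorovich lemma (Lemma~\ref{NewtonKantorovich}) directly to the operator $E$ on the Banach space $X = Y = A(\bar\T^d_\rho, \C^n)$, with $x_0 = K_0$. By the definition of $D_{\rho,R}$, the ball $B_R(K_0) \subset X$ sits inside $\calB^*$, so $E$ is well defined there, and the real-analyticity of $F$ provides the required $C^1$ regularity of $E$.

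Next, I would translate the three hypotheses of the theorem into the three hypotheses of the lemma. Hypothesis (\ref{lem:cond:a}) is exactly condition (\ref{condA}) after taking the supremum norm. A direct differentiation at $K_0$ gives
\begin{equation*}
\Dif E(K_0)\Delta(\theta) = M_0(\theta - \omega)\Delta(\theta - \omega) - \Delta(\theta) = (\calM_0 - \Id)\Delta(\theta),
\end{equation*}
so hypothesis (\ref{lem:cond:b}) reduces to condition (\ref{condB}). For (\ref{lem:cond:c}), the difference
\begin{equation*}
(\Dif E(K) - \Dif E(K_0))\Delta(\theta) = \bigl(\Dif_z F(K(\theta - \omega), \theta - \omega) - \Dif_z F(K_0(\theta - \omega), \theta - \omega)\bigr)\Delta(\theta - \omega)
\end{equation*}
is bounded in operator norm by $b(r)\NORM{K - K_0}_\rho$, by condition (\ref{condC}) applied pointwise in $\theta$ together with the definition of the norm $\NORM{\cdot}_\rho$. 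Since the quantitative smallness requirements 1) and 2) in the theorem coincide verbatim with those of the lemma, Lemma~\ref{NewtonKantorovich} yields a unique $K_* \in \bar B_{r_*}(K_0)$ with $E(K_*) = 0$ and, incidentally, the invertibility of $\Dif E(K_*) = \calM_* - \Id$ together with a bound on its inverse.

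The main obstacle is the final assertion that the perturbed cocycle $M_*$ is fiberwise hyperbolic, since the lemma only rules out the eigenvalue $1$ from the spectrum of $\calM_*$ rather than the entire unit circle. The natural approach is to exploit the closeness of the transfer operators: condition (\ref{condC}) yields
\begin{equation*}
\NORM{\calM_* - \calM_0}_\rho \leq \sup_\theta \NORM{\Dif_z F(K_*(\theta), \theta) - \Dif_z F(K_0(\theta), \theta)} \leq b(r_*) r_*.
\end{equation*}
Using the reducibility of $M_0$ to the block-diagonal form $\Lambda$ via the conjugacy $P$, the same conjugation applied to $M_*$ produces a perturbation of $\Lambda$ of size $O(b(r_*)r_*)$. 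A Neumann-series argument mirroring the bounds $\NORM{(\calL_s - \Id)^{-1}}_\rho \leq (1 - \lambda_s)^{-1}$ and $\NORM{(\calL_u - \Id)^{-1}}_\rho \leq \lambda_u/(1 - \lambda_u)$ recorded just before the theorem (applied uniformly along the unit circle rather than only at $\lambda = 1$) preserves the stable/unstable splitting with slightly degraded rates $\lambda_s^*, \lambda_u^*$ close to $\lambda_s, \lambda_u$, provided the perturbation is small compared to the spectral gap. This delivers the hyperbolicity of $\calM_*$ on $A(\bar\T^d_\rho, \C^n)$ and hence the fiberwise hyperbolicity of $K_*$.
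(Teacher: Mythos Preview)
Your argument for existence and uniqueness of $K_*$ is essentially identical to the paper's: both set $X=Y=A(\bar\T^d_\rho,\C^n)$, identify $\Dif E(K_0)=\calM_0-\calI$, check that condition~(\ref{condC}) of the theorem yields condition~\ref{lem:cond:c} of Lemma~\ref{NewtonKantorovich}, and then invoke the lemma verbatim.

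Where you diverge from the paper is in the treatment of the ``moreover'' clause. The paper's proof simply says ``the result follows directly from applying Lemma~\ref{NewtonKantorovich}'' and does not separately justify hyperbolicity of $\calM_*$; it tacitly identifies the lemma's conclusion (invertibility of $\Dif E(K_*)=\calM_*-\calI$) with fiberwise hyperbolicity. You correctly notice that invertibility of $\calM_*-\calI$ only excludes the single point $1$ from the spectrum, not the full unit circle, and you try to supply the missing argument. In that sense your write-up is more careful than the paper's.

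That said, your proposed fix has a loose end: you invoke the reducing frame $P$ and the block form $\Lambda=\mathrm{diag}(\Lambda_s,\Lambda_u)$, but these objects are not part of the hypotheses of Theorem~\ref{thm::validation}; hypothesis~(\ref{condB}) only asserts spectral hyperbolicity of $\calM_0$ and the bound $\sigma$ on $(\calM_0-\calI)^{-1}$. The reducibility data appear only in the subsequent Hyperbolicity Control lemma. A version of your perturbation argument that stays within the theorem's hypotheses would appeal directly to upper semicontinuity of the spectrum: since $\calM_0$ is hyperbolic, $\sup_{|\lambda|=1}\NORM{(\calM_0-\lambda\calI)^{-1}}<\infty$, and the bound $\NORM{\calM_*-\calM_0}_\rho\le b(r_*)r_*$ then keeps the unit circle in the resolvent of $\calM_*$ for $r_*$ small enough. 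Strictly speaking, the theorem's quantitative conditions~1) and~2) only control the perturbation against $\sigma$, not against this uniform resolvent bound, so neither your argument nor the paper's makes the hyperbolicity claim fully quantitative; but this is a gap in the statement rather than in your approach.
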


\begin{proof}
    It is only necessary to adapt the conditions to match those of \lemref{NewtonKantorovich}. Define for $r \leq R$, $\Bar{B}_r (K_0) = \{ K \in A(\bar\T^d_\rho,\C^n) \st \NORM{K - K_0}_\rho \leq r \}$. Let $E:B_R(K_0) \subset A(\bar\T^d_\rho,\C^n) \to A(\bar\T^d_\rho,\C^n)$ be defined by $E(K_0)(\theta) := F(K_0(\theta - \omega), \theta - \omega) - K_0(\theta)$. Condition \ref{condA} is straightforward. For condition \ref{condB}, it is easy to see that for $K\in B_R(K_0)$,  $\Dif E(K) : A(\bar\T^d_\rho,\C^n) \to A(\bar\T^d_\rho,\C^n)$ is given by $\Dif E(K) \Delta(\theta) = M(\theta - \omega) \Delta(\theta - \omega) - \Delta(\theta)$. Thus, $\Dif E(K_0) = \calM_0 - \calI$. For condition \ref{condC}, we can just see that for any $K \in \Bar{B}_r$,
    \begin{align*}
        \|\Dif &E(K) - \Dif E(K_0)\|_{A(\bar\T^d_\rho,\C^n)} \\
        &=\NORM{\Dif_z F(K(\theta - \omega), \theta - \omega) - \Dif_z F(K_0(\theta - \omega), \theta - \omega)}_\rho \leq b(r) \NORM{K - K_0}_\rho .
    \end{align*}
    The result follows directly from applying \lemref{NewtonKantorovich}.
\end{proof}
Note that a similar theorem is proved in \cite{DelallaveHaro2006} in the $C^r$-category.

\begin{remark}
    Following \remref{rem::radiuses} and \thmref{thm::validation}, we can say that $\exists ! \ K_* \in B_{r_+}$ such that $ E(K_*) = 0 $ and $\NORM{K_* - K_0} \leq r_-$.
\end{remark}

\subsection{Hyperbolicity Control}

It is now natural to wonder how to derive the invertibility of $\Dif E(K_0) = \calM_0 - \calI$ from the hyperbolicity that we defined earlier. This amounts to calculating $(\calM_0 - \calI)^{-1}v(\theta)$, that is, solving $(\calM_0 - \calI)u(\theta) = v(\theta)$. For this purpose, it is useful to write the reducibility properties of our $M_0$ in terms of the transfer operator.

In this paper, we assume our approximate invariant torus $K_0$ to be approximately reducible in the sense we described in Section \ref{sec:setting}. In particular, we will see a method to produce bound \ref{condB} of Theorem \ref{thm::validation}.

\begin{lemma}
    Assume that there exists a map $P_1 \colon \T_\rho^d \to \C^{n \times n}$ whose approximate inverse is given by $P_2 \colon \T_\rho^d \to \C^{n \times n}$, and a map $\Lambda \colon \T_\rho^d \to \C^{n \times n}$ such that
\begin{align*}
    E_\text{red}(\theta - \omega) &= P_2(\theta) M_0(\theta - \omega) P_1(\theta - \omega) - \Lambda (\theta - \omega),\\
     E_\text{inv}(\theta) &= \Id - P_2(\theta) P_1(\theta),
\end{align*}
where $\Lambda=\text{diag}(\Lambda_s, \Lambda_u)$ (see Section \ref{subsec:transfer}), such that $\NORM{\Lambda_s}_\rho \leq \lambda_s <1$ and $\NORM{\Lambda_u^{-1}}_\rho \leq \lambda_u <1$. Define
\[
\lambda = \max \left( \lambda_s , 2- \frac{1}{\lambda_u} \right).
\]
Assume also that the error in reducibility, along with the error of invertibility of the map $P_1$, is rather small;
\begin{enumerate}[a)]
    \item $\|E_\text{red}(\theta)\|_\rho = \ep_1 < 1$;\label{redErr}
    \item  $\|E_\text{inv}(\theta)\|_\rho = \ep_2 < 1$,\label{invErr}
\end{enumerate}
and moreover
\begin{enumerate}[c)]
    \item $\lambda+\ep_1 + \ep_2 < 1$. \label{lemma:condC}
\end{enumerate}
Then, $\NORM{(\calM - \calI)^{-1}}_\rho \leq \sigma$ where
\begin{equation*}
    \sigma = \NORM{P_1}_\rho \frac{1}{1 - (\lambda + \ep_1 + \ep_2)} \NORM{P_2}_\rho .
\end{equation*}
\end{lemma}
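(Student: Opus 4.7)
The plan is to use the approximate reducibility to conjugate $\calM - \calI$ into an operator close to the block-diagonal $\calL - \calI$, and then invert via a Neumann series. Writing $\calP_i$ for the pointwise multiplication operator $u(\theta) \mapsto P_i(\theta) u(\theta)$, the first step is to expand $(\calP_2 (\calM - \calI) \calP_1 u)(\theta)$ and substitute both the reducibility equation for $E_\text{red}$ and the invertibility equation for $E_\text{inv}$. This will produce the key identity
\[
\calP_2 (\calM - \calI) \calP_1 \;=\; (\calL - \calI) + \calE_\text{red} + \calE_\text{inv} \;=:\; \calA,
\]
where $\calE_\text{red}$ is the transfer-type operator $u\mapsto E_\text{red}(\cdot-\omega)u(\cdot-\omega)$ and $\calE_\text{inv}$ is pointwise multiplication by $E_\text{inv}$, with $\NORM{\calE_\text{red}}_\rho\leq\eps_1$ and $\NORM{\calE_\text{inv}}_\rho\leq\eps_2$.

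Next, I would bound $\NORM{(\calL - \calI)^{-1}}_\rho$. Since $\Lambda = \mathrm{diag}(\Lambda_s,\Lambda_u)$ is block-diagonal, so is $\calL$ and so is $(\calL - \calI)^{-1}$; the norm of the latter equals the maximum of the norms of its block-inverses. Combining the bounds $\NORM{(\calL_s - \calI)^{-1}}_\rho \leq 1/(1-\lambda_s)$ and $\NORM{(\calL_u - \calI)^{-1}}_\rho \leq \lambda_u/(1-\lambda_u)$ recalled earlier in the paper with the algebraic identity $\lambda_u/(1-\lambda_u) = 1/(1-(2-1/\lambda_u))$ and the definition of $\lambda$, I obtain $\NORM{(\calL - \calI)^{-1}}_\rho \leq 1/(1-\lambda)$.

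I would then invert $\calA$ by a Neumann series argument, factoring
\[
\calA \;=\; (\calL - \calI)\bigl[\Id + (\calL - \calI)^{-1}(\calE_\text{red} + \calE_\text{inv})\bigr].
\]
By hypothesis \ref{lemma:condC}, the perturbation has operator norm at most $(\eps_1+\eps_2)/(1-\lambda) < 1$, so the bracketed factor is invertible with inverse of norm at most $(1-\lambda)/(1-\lambda-\eps_1-\eps_2)$; multiplying with the bound on $(\calL - \calI)^{-1}$ delivers $\NORM{\calA^{-1}}_\rho \leq 1/(1-\lambda-\eps_1-\eps_2)$.

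Finally, I would observe that $P_2(\theta) P_1(\theta) = \Id - E_\text{inv}(\theta)$ is pointwise invertible because $\NORM{E_\text{inv}}_\rho<1$, which forces each of $P_1(\theta),P_2(\theta)$ to be itself invertible as an $n\times n$ matrix with analytic inverse; hence $\calP_1,\calP_2$ are bounded invertible operators on $A(\bar\T^d_\rho,\C^n)$. Inverting the key identity then gives $(\calM - \calI)^{-1} = \calP_1 \calA^{-1} \calP_2$, and submultiplicativity yields
\[
\NORM{(\calM - \calI)^{-1}}_\rho \;\leq\; \NORM{P_1}_\rho \cdot \frac{1}{1-\lambda-\eps_1-\eps_2} \cdot \NORM{P_2}_\rho \;=\; \sigma.
\]
The main subtle point is that $P_2$ is only an \emph{approximate} pointwise inverse of $P_1$; this defect is absorbed cleanly into the additional error term $\calE_\text{inv}$ appearing in the first step, and causes no further complication.
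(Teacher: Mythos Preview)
Your proposal is correct and follows essentially the same approach as the paper: both establish the operator identity $\calP_2(\calM-\calI)\calP_1 = (\calL-\calI)+\calE_\text{red}+\calE_\text{inv}$, bound $\NORM{(\calL-\calI)^{-1}}_\rho\le 1/(1-\lambda)$ via the block structure, invert the right-hand side by a Neumann-series perturbation of $\calL-\calI$, and then sandwich with $\calP_1,\calP_2$ after observing that $P_2P_1=\Id-E_\text{inv}$ forces invertibility of $P_1,P_2$. Your write-up is in fact slightly more explicit than the paper's about the Neumann factorization $\calA=(\calL-\calI)\bigl[\Id+(\calL-\calI)^{-1}(\calE_\text{red}+\calE_\text{inv})\bigr]$, which the paper compresses into the single sentence ``by hypothesis~\ref{lemma:condC} the whole equation is invertible''.
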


\begin{proof}

From \ref{invErr}, we can induce that both maps $P_1$ and $P_2$ are invertible. Since $E_\text{inv} = \Id -P_2 P_1$ and therefore $P_2P_1 = \Id - E_\text{inv}$, using a Neumann series argument given \ref{invErr}, both $\Id - E_\text{inv}$ and $P_2P_1$ are invertible and
\begin{equation*}
    P_1^{-1}=(\Id - E_\text{inv})^{-1} P_2 \  , \quad P_2^{-1}=P_1(\Id - E_\text{inv})^{-1} .
\end{equation*}

We need now to express the reducibility error equation in terms of their functionals, defined as follows

\begin{itemize}
    \item $\calP_1 \Delta (\theta) = P_1(\theta) \Delta(\theta)$;
    \item $\calP_2 \Delta (\theta) = P_2(\theta) \Delta(\theta)$;
    \item $\calM \Delta (\theta) = M(\theta - \omega) \Delta(\theta - \omega)$;
    \item $\calL_s \Delta_s (\theta) = \Lambda_s(\theta - \omega) \Delta_s(\theta - \omega)$;
    \item $\calL_u \Delta_u (\theta) = \Lambda_u(\theta - \omega) \Delta_u(\theta - \omega)$;
    \item $\calE_\text{red} \Delta (\theta) = E_\text{red} (\theta - \omega) \Delta (\theta - \omega)$;
    \item $\calE_\text{inv} \Delta (\theta) = E_\text{inv} (\theta) \Delta (\theta)$.
\end{itemize}
With this, we can move from the cocycle notation to the transfer operator notation.
\begin{align*}
    \calE_\text{red} &= \calP_2 \calM \calP_1 - \calL \nonumber \\
     &= \calP_2 (\calM - \calI) \calP_1 + \calP_2 \calP_1 - \calL + \calI - \calI \nonumber \\
    &= \calP_2 (\calM - \calI) \calP_1 - \calE_\text{inv} - (\calL - \calI) ,
    \end{align*}
which implies
\begin{equation*}
        \calE_\text{inv} + \calE_\text{red} = \calP_2 (\calM - \calI) \calP_1 - (\calL - \calI),
\end{equation*}
from where
    \begin{align}\label{eq::hyperInvertible}
    \calM - \calI &= \calP_2^{-1} \big( (\calL - \calI) + \calE_\text{inv} + \calE_\text{red} \big) \calP_1^{-1} \nonumber \\
    &= -\calP_2^{-1} \big[ \calI - (\calL + \calE_\text{red} + \calE_\text{inv}) \big] \calP_1^{-1}.
\end{align}
Now, if $\NORM{\Lambda_s}_\rho \leq \lambda_s <1$ and $\NORM{\Lambda_u^{-1}}_\rho \leq \lambda_u <1$, by the fixed-point theorem, $\NORM{(\calL_s-\calI)^{-1}}_\rho \leq \frac{1}{1-\lambda_s}$ and $\NORM{(\calL_u-\calI)^{-1}}_\rho \leq \frac{\lambda_u}{1-\lambda_u}$, and therefore,
\begin{equation*}
    \NORM{(\calL - \calI)^{-1}}_\rho \leq \max\left( \frac{1}{1-\lambda_s}, \frac{\lambda_u}{1-\lambda_u} \right) = \frac{1}{1 - \lambda}
\end{equation*}
given our definition of $\lambda$. By \ref{lemma:condC}, the whole equation \eqref{eq::hyperInvertible} is invertible and
\begin{align*}
    (\calM - \calI)^{-1} &= -\calP_1 \big[ \calI - (\calL + \calE_\text{red} + \calE_\text{inv}) \big]^{-1} \calP_2 \ .
\end{align*}
Giving as a result
\begin{align*}
    \NORM{(\calM - \calI)^{-1}}_\rho \leq \NORM{\calP_1}_\rho \frac{1}{1 - (\lambda + \ep_1 + \ep_2)} \NORM{\calP_2}_\rho \leq \NORM{P_1}_\rho \frac{1}{1 - (\lambda + \ep_1 + \ep_2)} \NORM{P_2}_\rho.
\end{align*}
\end{proof}

\section{Fourier Series Estimates}\label{sec::Fourier}
The heart of our computer-assisted methodology, as presented in this work, revolves around bounding the error that arises when approximating a periodic function using its discrete Fourier transform. This problem, which naturally arises in the field of approximation theory, has long been recognized for its significance. Motivated by the context of our present paper, we specifically tackle this problem for analytic functions. The estimates presented in this section have been borrowed from those provided in \cite{FiguerasHaroLuque2017} and adapted for this specific scenario (such as our simplification in even-sized grids or the 1D case implementation). Nonetheless, for the sake of completeness, we present here the most general and relevant results. For more details and complete proofs, refer to~\cite{FiguerasHaroLuque2017}.

\subsection{Discretization of the Torus and Fourier
Transforms}\label{ssec:notation:dft}

Given a real-analytic function $u: \T^d_{\hatrho} \rightarrow \C$,
we consider its Fourier series
\[
u(\theta)=\sum_{k \in \Z^d} \hat{u}_k \ee^{2 \pi \ii k \cdot \theta},
\]
where the Fourier coefficients are given by the \emph{Fourier transform} (${\rm
FT}$)
\begin{equation}\label{eq:fourier:coef}
\hat{u}_k = \int_{[0,1]^d} u(\theta) \ee^{-2 \pi \ii k \cdot \theta} \dif
\theta.
\end{equation}
\begin{remark}
    Notice that the Fourier coefficients satisfy the symmetry $u^*_k = u_{-k}$, where $u^*_k$ denotes the complex conjugate of $u_{k}$. Such coefficients decrease exponentially fast, which will prove a useful property later on.
\end{remark}
For $\rho < \hatrho$, its \emph{Fourier norm} is given by
\begin{equation*}
\NORM{u}_{F,\rho}=\sum_{k \in \Z^d} | \hat{u}_k| \ee^{2\pi |k|_1 \rho} < \infty,
\end{equation*}
where $|k|_1 = \sum_{i= 1}^d |k_i|$.  We observe that $\NORM{u}_{\rho} \leq
\NORM{u}_{F,\rho} < \infty$. In practical applications, the Fourier norm can be very effective in order to bound the supremum norm on a complex strip, as we will see in Section \ref{sec::methodology}.

We consider a sample of points on the regular grid of size $\NF=(\Ni{1},\ldots,\Ni{d}) \in \N^d$
\begin{equation*}
\theta_j:=(\theta_{j_1},\ldots,\theta_{j_d})=
\left(\frac{j_1}{\Ni{1}},\ldots,
\frac{j_d}{\Ni{d}}\right),
\end{equation*}
where $j= (j_1,\ldots,j_d)$, with $0\leq j_\ell < \Ni{\ell}$ and $1\leq \ell
\leq d$. This defines a $d$-dimensional sampling $\{u_j\}$, with
$u_j=u(\theta_j)$. The total number of points is  $\Ntot = \Ni{1} \cdots
\Ni{d}$.  The integrals in Equation~\eqref{eq:fourier:coef} are approximated
using the trapezoidal rule on the regular grid, obtaining the discrete Fourier
transform (DFT)
\[
\tilde{u}_k= \frac{1}{\Ntot} \sum_{0\leq j < \NF} u_j \ee^{-2\pi
\ii k \cdot \theta_j},
\]
where the sum runs over integer subindices $j \in \Z^d$ such that $0\leq
j_\ell < \Ni{\ell}$ for $\ell= 1,\dots, d$.  Notice that $\tilde u_k$ is
periodic with respect to the components $k_1,\dots, k_d$ of $k$, with
periods $\Ni{1}, \dots, \Ni{d}$, respectively.
The periodic function $u$ is approximated by the discrete Fourier approximation (which is an approximation of the truncated Fourier series),
\begin{equation*}
\tilde u(\theta)= \sum_{k \in \INF} \tilde{u}_k \ee^{2 \pi \ii k \cdot
\theta},
\end{equation*}
where $\INF$ is the finite set of multi-indices given by
\begin{equation*}
 \INF= \bigg\{ k \in \Z^d \st -\frac{\Ni{\ell}}{2} \leq k_\ell <
 \frac{\Ni{\ell}}{2}, 1\leq \ell \leq d \bigg\}.
\end{equation*}
Along this section, we will use the standard notation $[x]$ for the integer part
of $x$: $[x]=\min\left\{j\in\mathbb Z \st x\leq j\right\}$.

\begin{remark}
As we have previously stated, the Fourier coefficients are symmetrical, that is, $\hat{u}_k^* = \hat{u}_{-k}$, which holds for the DFT coefficients as well. This presents a problem regarding the way the DFT is defined. See that since we are treating the real-analytic case, our function $u$ evaluated over the points of the grid will acquire real values, but depending on the parity of the size of the grid, $N_{F,\ell}$, for $0 \leq \ell \leq d$, the discrete approximation outside the grid will not. The reason behind this phenomenon lies on the fact that if $N_{F,\ell}$ is odd, due to the coefficients' symmetry, the resulting function will remain real, but if $N_{F,\ell}$ is even, then $N_{F,\ell}-1$ is odd, which means that the term $- \left[ \frac{N_{F,\ell}}{2} \right]$ of the sum, called the Nyquist term, will be unpaired. The lack of its symmetrical pair results on a complex function whose derivative will have the imaginary term $\ii$. This does not present a major issue since the Nyquist term will naturally be very small. Nonetheless, if it is desired to look for a way to express the function $u$ in terms of its DFT without this little problem, one shall eliminate the Nyquist term. Since doing this does not come without a cost of breaking the grid-Fourier correspondence, we will pre-process our data such that the approximate $K$ is taken such that its Nyquist term is already zero.
\end{remark}

\subsection{Error Estimates on the Approximation of Analytic Periodic Functions}

Based on the focus of our paper, we work with real-analytic functions in spaces defined on a complex strip of the torus (see Section \ref{sec:setting}). However, the arguments presented can be adapted to other spaces as well. Our main goal is to accurately quantify the error between $\tilde{u}$ and $u$ by using appropriate norms. Next, we state the main result of this section, that allows us to control the
error between $\tilde u$ and $u$.

\begin{theorem}
\label{FourierEstimate} Let $u \in A(\bar{\T}^d_{\hatrho},\C)$, with $\hatrho>0$.
Let $\tilde u$ be the discrete Fourier approximation of $u$ in the regular
grid of size
$\NF= (\Ni{1},\dots, \Ni{d}) \in \N^d$.  Then,
\[
\NORM{\tilde u-u}_\rho \leq C_{\NF}(\rho, \hatrho) \NORM{u}_{\hatrho} \ ,
\]
for $0\leq \rho < \hatrho$,
where $C_{\NF}(\rho, \hatrho)= S_\NF^{*1}(\rho,\hatrho) + S_\NF^{*2}(\rho,\hatrho)  +
T_\NF(\rho,\hatrho)$ is given by
\[
S_\NF^{*1}(\rho,\hatrho) =
\prod_{\ell= 1}^d \frac{1}{1-\ee^{-2\pi  \hatrho\Ni{\ell} }}
\sum_{\begin{array}{c} \sigma\in \{-1,1\}^d \\ \sigma\neq (1,\dots,1) \end{array}}
\prod_{\ell= 1}^d \ee^{(\sigma_\ell-1)\pi\hatrho \Ni{\ell}} \nuNi{\ell}(\sigma_\ell\hatrho-\rho),
\]
\[
S_\NF^{*2}(\rho,\hatrho) = \prod_{\ell= 1}^d \frac{1}{1-\ee^{-2\pi
\hatrho\Ni{\ell} }}  \left(1- \prod_{\ell= 1}^d  \left(1-\ee^{-2\pi
\hatrho\Ni{\ell} }\right)\right) \prod_{\ell= 1}^d \nuNi{\ell}(\hatrho-\rho)
\]
and
\[
T_\NF(\rho,\hatrho)= \left(  \frac{\ee^{2\pi (\hatrho-\rho)} + 1}{\ee^{2\pi
(\hatrho-\rho)} -1} \right)^d \ \left( 1 - \prod_{\ell= 1}^d \left(1-
\muNi{\ell}(\hatrho-\rho)\ e^{-\pi(\hatrho-\rho) \Ni{\ell}} \right) \right),
\]
with
\[
\nuNi{\ell}(\delta)= \frac{\ee^{2\pi \delta} + 1 }{\ee^{2\pi \delta} -1}
\left(1- \muNi{\ell}(\delta) \ \ee^{-\pi \delta \Ni{\ell}}\right) \quad
\mbox{and} \quad \muNi{\ell}(\delta) =
\begin{cases}
\ 1 &\mbox{if $\Ni{\ell}$ is even}, \\ \displaystyle \frac{2
\ee^{\pi\delta}}{\ee^{2\pi\delta}+1} &\mbox{if $\Ni{\ell}$ is odd}.
\end{cases}
\]
\end{theorem}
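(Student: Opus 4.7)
My plan is to combine a Cauchy decay estimate for Fourier coefficients of analytic functions with the standard DFT aliasing identity, and then to carry out the multi-dimensional geometric-series bookkeeping that isolates the three constants $S_\NF^{*1}$, $S_\NF^{*2}$ and $T_\NF$. The Cauchy estimate, obtained by shifting the integration contour in \eqref{eq:fourier:coef} along each coordinate $\theta_\ell$ by $-\mathrm{sgn}(k_\ell)\hatrho$, gives $|\hat u_k|\le \ee^{-2\pi |k|_1 \hatrho}\NORM{u}_{\hatrho}$; this is the sole analyticity input. The aliasing identity
\[
\tilde u_k = \sum_{m \in \Z^d} \hat u_{k + m \odot \NF}, \qquad (m\odot \NF)_\ell = m_\ell \Ni{\ell},
\]
follows from substituting the Fourier series of $u$ into the DFT and invoking discrete orthogonality of the characters on the grid. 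Subtracting $u$ from $\tilde u$ then yields the decomposition
\[
\tilde u(\theta)- u(\theta) = \sum_{k \in \INF}\sum_{m \in \Z^d \setminus \{0\}} \hat u_{k + m \odot \NF}\, \ee^{2\pi \ii k\cdot \theta} \;-\; \sum_{k \notin \INF} \hat u_k\, \ee^{2\pi \ii k\cdot \theta},
\]
so $\NORM{\tilde u - u}_\rho$ splits naturally into an aliasing contribution and a truncation tail.

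For the tail, I bound $|\ee^{2\pi \ii k\cdot \theta}|\le \ee^{2\pi|k|_1 \rho}$ on $\bar\T^d_\rho$, insert the Cauchy estimate, and partition $\Z^d \setminus \INF$ by inclusion-exclusion over which subset of coordinates satisfies $|k_\ell| \ge [\Ni{\ell}/2]$. Each coordinate then contributes a one-dimensional geometric sum whose leading index $\pm[\Ni{\ell}/2]$ is paired or unpaired depending on the parity of $\Ni{\ell}$; this is exactly where the parity-dependent correction $\muNi{\ell}$ enters, and reassembling the products yields $T_\NF(\rho,\hatrho)$. For the aliasing piece, I decompose the sum over $m\in\Z^d\setminus\{0\}$ according to the sign pattern $\sigma\in\{-1,1\}^d$ of its components, reducing each inner sum over $m_\ell \ge 1$ (resp.\ $m_\ell \le -1$) to a one-dimensional geometric series of ratio $\ee^{-2\pi\hatrho \Ni{\ell}}$; these sums are precisely what is packaged into $\nuNi{\ell}(\sigma_\ell\hatrho - \rho)$. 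The $2^d-1$ orthants with $\sigma \neq (1,\dots,1)$ generate $S_\NF^{*1}$, while the symmetric correction needed to make the decomposition match the full sum over $\Z^d \setminus \{0\}$ produces $S_\NF^{*2}$.

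The conceptual content lies entirely in the two ingredients above; the main obstacle is purely combinatorial, namely tracking the sign-pattern decomposition consistently with the parity-dependent Nyquist corrections $\muNi{\ell}$ and $\nuNi{\ell}$ across all $d$ coordinates simultaneously, and properly pairing the leftover $\sigma=(1,\dots,1)$ orthant to the tail-side accounting. Once the one-dimensional geometric series are identified and summed, assembly into the final estimate $C_\NF(\rho,\hatrho)=S_\NF^{*1}+S_\NF^{*2}+T_\NF$ is mechanical, and I would defer these routine manipulations to \cite{FiguerasHaroLuque2017}, from which this statement is adapted.
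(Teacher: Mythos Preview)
The paper does not actually prove this theorem: it is stated without proof, with the explicit remark that the estimates are borrowed from \cite{FiguerasHaroLuque2017} and that ``for more details and complete proofs, refer to~\cite{FiguerasHaroLuque2017}.'' Your sketch --- Cauchy decay of Fourier coefficients, the aliasing identity $\tilde u_k=\sum_m \hat u_{k+m\odot\NF}$, and the split into an aliasing sum plus a truncation tail, followed by sign-pattern/inclusion-exclusion bookkeeping --- is precisely the strategy carried out in that reference, and since you likewise defer the final combinatorics to \cite{FiguerasHaroLuque2017}, your proposal is consistent with (and in fact more informative than) what the paper itself provides.
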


\begin{remark}
As remarked in \cite{FiguerasHaroLuque2017},
\[
C_{\NF}(\rho,\hatrho)
\simeq O(\ee^{-\pi (\hat \rho-\rho) \min_{\ell} \{\Ni{\ell}\}}) \ ,
\]
which implies that the error caused by DFT approximation should be very small if $\hatrho$ is chosen appropriately. This means that working with a DFT-approximated version of our function should provide very accurate results when properly implemented with interval arithmetics.
\end{remark}

\begin{remark}
    Although \thmref{FourierEstimate} is presented in the case of a $d$-dimensional torus, the estimates can be greatly simplified when working with a $1$-dimensional torus, as  will be our case. Notice also that if we choose $\NF$ to be even (see \cite{FiguerasHaroLuque2017} for other scenarios), the estimates simplify further to give the terms

    \begin{align*}
S_{\NF}^{*1}(\rho, \hatrho)&= \frac{\ee^{-2 \pi \hatrho \NF}}{1-\ee^{-2 \pi \hatrho \NF}}
\frac{\ee^{-2\pi (\hatrho + \rho)}+1}{\ee^{-2 \pi (\hatrho+\rho)}-1} \left( 1 - \ee^{\pi (\hatrho + \rho) \NF} \right) ,\\
S_{\NF}^{*2}(\rho, \hatrho)&= \frac{\ee^{-2 \pi \hatrho \NF}}{1-\ee^{-2 \pi \hatrho \NF}}
\frac{\ee^{2\pi (\hatrho - \rho)}+1}{\ee^{2 \pi (\hatrho-\rho)}-1} \left( 1 - \ee^{-\pi (\hatrho - \rho) \NF} \right) ,\\
T_{\NF}(\rho, \hatrho)&= \frac{\ee^{2\pi (\hatrho - \rho)}+1}{\ee^{2 \pi (\hatrho-\rho)}-1} \ee^{-\pi (\hatrho - \rho) \NF} \ .
\end{align*}
\end{remark}

\subsection{Results on Matrices of Periodic Functions}\label{ssec:dft:matrices}

In this section, we consider some extensions of \thmref{FourierEstimate} to deal
with matrix functions $A \in A(\bar\T^d_{\hatrho},\C^{m_1 \times m_2})$. Our goal
is to control the propagation of the error when we perform matrix operations.
Specifically, we are interested in the study of products and inverses,
but the ideas given below can be adapted to control other operations if necessary. The first result is obtained directly from \thmref{FourierEstimate}:

\begin{corollary}\label{cor:matrix:multi}
Let us consider two matrix functions $A \in A(\bar{\T}^d_{\hatrho},\C^{m_1 \times m_2})$, and $B \in A(\bar{\T}^d_{\hatrho},\C^{m_2 \times m_3})$. We denote by $A B$
the product matrix and $\smash{\widetilde{A B}}$ the corresponding
approximation given by DFT. Given a grid of size $\NF=(\Ni{1},\ldots,\Ni{n})$,
we evaluate $A$ and $B$ in the grid, and we interpolate the points $A
B(\theta_j)=A(\theta_j) B(\theta_j)$ to get $\smash{\widetilde{A B}}$.  Then, we have
\begin{equation*}
\NORM{AB-\widetilde{A B}}_\rho \leq C_{\NF}(\rho,\hat \rho) \NORM{A}_{\hat \rho}
\NORM{B}_{\hat \rho} \ ,
\end{equation*}
for every $0 \leq \rho < \hat \rho$,
where $C_{\NF}(\rho,\hat \rho)$ is given in \thmref{FourierEstimate}.
\end{corollary}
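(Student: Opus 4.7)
The plan is to reduce the matrix-valued statement to the scalar estimate of \thmref{FourierEstimate} applied entrywise. First, I would note that the product $AB$ is again a real-analytic matrix function in $A(\bar\T^d_{\hat\rho}, \C^{m_1 \times m_3})$, since each entry $(AB)_{ij}(\theta) = \sum_k A_{ik}(\theta) B_{kj}(\theta)$ is a finite sum of products of real-analytic scalar functions. Because matrix multiplication is performed pointwise, the grid samples $A(\theta_j) B(\theta_j)$ coincide with $(AB)(\theta_j)$, so $(\widetilde{AB})_{ij}$ is nothing but the scalar discrete Fourier approximation of $(AB)_{ij}$ constructed in \secref{ssec:notation:dft}.

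With this identification in hand, I would apply \thmref{FourierEstimate} to each scalar entry to obtain
\[
\NORM{(\widetilde{AB})_{ij} - (AB)_{ij}}_\rho \leq C_{\NF}(\rho, \hat\rho) \NORM{(AB)_{ij}}_{\hat\rho}
\]
for every admissible $i,j$. Summing over the column index $j$ and taking the maximum over the row index $i$, using the matrix norm $\NORM{M}_\rho = \max_i \sum_j \NORM{M_{ij}}_\rho$ introduced in \secref{sec:setting}, produces the packaged bound
\[
\NORM{\widetilde{AB} - AB}_\rho \leq C_{\NF}(\rho, \hat\rho) \NORM{AB}_{\hat\rho}.
\]

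To finish, I would invoke submultiplicativity of this matrix norm, i.e. $\NORM{AB}_{\hat\rho} \leq \NORM{A}_{\hat\rho} \NORM{B}_{\hat\rho}$. This is a short computation: from $(AB)_{ij} = \sum_k A_{ik} B_{kj}$, the triangle inequality together with the submultiplicativity of the scalar supremum norm on $A(\bar\T^d_{\hat\rho},\C)$ yields $\NORM{(AB)_{ij}}_{\hat\rho} \leq \sum_k \NORM{A_{ik}}_{\hat\rho} \NORM{B_{kj}}_{\hat\rho}$; summing over $j$, interchanging sums, bounding $\sum_j \NORM{B_{kj}}_{\hat\rho} \leq \NORM{B}_{\hat\rho}$ uniformly in $k$, and finally maximizing over $i$ produces the inequality. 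Chaining it with the previous bound gives the stated estimate. I do not anticipate any real obstacle here: the content of the corollary is essentially the observation that \thmref{FourierEstimate} survives being packaged into matrix arithmetic, provided one works with a submultiplicative matrix norm such as the row-sum norm adopted in this paper.
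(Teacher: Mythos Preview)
Your proposal is correct and matches the paper's approach: the paper does not give an explicit proof but simply states that the corollary ``is obtained directly from \thmref{FourierEstimate}'', and your entrywise application of that theorem followed by the submultiplicativity of the row-sum matrix norm is precisely the intended unpacking of that remark.
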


Notice that Corollary~\ref{cor:matrix:multi} is useful to control
the product of approximated objects. If $\tilde A$ and
$\tilde B$ are the corresponding approximations of $A$ and $B$ given by DFT,
then
\begin{equation}\label{eq:dtf:matrix:trunc}
\NORM{\tilde A \tilde B-\widetilde{\tilde A \tilde B}}_\rho
\leq C_{\NF}(\rho,\hat \rho) \NORM{\tilde A}_{\hat \rho} \NORM{\tilde B}_{\hat \rho}
\leq C_{\NF}(\rho,\hat \rho) \NORM{\tilde A}_{F,\hat \rho} \NORM{\tilde B}_{F,\hat \rho}
\end{equation}
for every $0 \leq \rho < \hat \rho$.
Notice that since $\tilde A$ and $\tilde B$
are Fourier series with finite support, then
it is interesting to control Equation~\eqref{eq:dtf:matrix:trunc}
using Fourier norms. The second result allows us to control the inverse of a matrix using the
discrete Fourier approximation:

\begin{corollary}\label{cor:matrix:inv}
Let us consider a matrix function $A \in A(\bar{\T}^d_{\hatrho},\C^{m \times m})$.  Given a grid of size
$\NF=(\Ni{1},\ldots,\Ni{n})$, we evaluate $A$ in the grid and compute
the inverses $X(\theta_j)=A(\theta_j)^{-1}$.  Then, if $\tilde X$ is the
corresponding discrete Fourier approximation associated to the sample
$X(\theta_j)$, the error $\Gamma(\theta)=I_m-A(\theta) \tilde
X(\theta)$ satisfies
\begin{equation*}
\NORM{\Gamma}_\rho \leq C_{\NF} (\rho,\hat \rho) \NORM{A}_{\hat \rho} \NORM{\tilde
X}_{\hat \rho} \ ,
\end{equation*}
for $0 \leq \rho < \hat \rho$. Moreover, if $\NORM{\Gamma}_\rho<1$, there exists an inverse $A^{-1} \in A(\bar{\T}^d_{\rho},\C^{m \times m})$
satisfying
\begin{equation*}
\NORM{A^{-1}-\tilde X}_\rho \leq \frac{\NORM{\tilde X}_{\hat \rho}
\NORM{\Gamma}_\rho}{1-\NORM{\Gamma}_\rho} \ .
\end{equation*}
Furthermore,
\begin{align}\label{norm_inverse}
    \NORM{A^{-1}}_\rho &\leq \NORM{A^{-1} - \tilde{X}}_\rho + \NORM{\tilde{X}}_\rho \leq \frac{\NORM{\tilde X}_{\hat \rho}
\NORM{\Gamma}_\rho}{1-\NORM{\Gamma}_\rho} + \NORM{\tilde{X}}_\rho = \frac{\NORM{\tilde{X}}_\rho}{1-\NORM{\Gamma}_\rho} \nonumber \\
&\leq \frac{\NORM{\tilde{X}}_\rho}{1-C_{\NF} (\rho,\hat \rho) \NORM{A}_{\hat \rho} \NORM{\tilde
X}_{\hat \rho}} .
\end{align}
\end{corollary}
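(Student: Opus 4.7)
The strategy is to reduce the whole corollary to the multiplicative error estimate in Corollary~\ref{cor:matrix:multi} together with a Neumann series argument. The key observation is that by construction $\tilde X$ is the trigonometric interpolant of the samples $X(\theta_j)=A(\theta_j)^{-1}$, so at every grid node one has
\[
A(\theta_j)\,\tilde X(\theta_j) = A(\theta_j)\,A(\theta_j)^{-1} = I_m.
\]
Hence the matrix product $A\tilde X$, evaluated on the grid, coincides with the constant function $I_m$. Since the DFT of any constant matrix function equals that constant, we get $\widetilde{A\tilde X}(\theta)=I_m$ for every $\theta$. Therefore
\[
\Gamma(\theta) = I_m - A(\theta)\tilde X(\theta) = \widetilde{A\tilde X}(\theta) - (A\tilde X)(\theta),
\]
and applying Corollary~\ref{cor:matrix:multi} to the product $A\cdot \tilde X$ (both matrix-valued and analytic on $\T^d_{\hatrho}$) immediately yields
\[
\NORM{\Gamma}_\rho \leq C_{\NF}(\rho,\hatrho)\,\NORM{A}_{\hatrho}\,\NORM{\tilde X}_{\hatrho},
\]
which is the first bound.

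For the second part, assume $\NORM{\Gamma}_\rho<1$. Since $A\tilde X = I_m - \Gamma$, a standard Neumann series argument in the Banach algebra $A(\bar\T^d_\rho,\C^{m\times m})$ shows that $A\tilde X$ is invertible, with
\[
(A\tilde X)^{-1} = \sum_{k\geq 0}\Gamma^k
\qquad\text{and}\qquad
\NORM{(A\tilde X)^{-1}}_\rho \leq \frac{1}{1-\NORM{\Gamma}_\rho}.
\]
From this I can conclude that $A$ itself is invertible on the strip by writing $A^{-1} = \tilde X\,(A\tilde X)^{-1}$, which incidentally also shows $A^{-1}\in A(\bar\T^d_\rho,\C^{m\times m})$. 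Separating the $k=0$ term,
\[
A^{-1} - \tilde X = \tilde X\,\bigl((A\tilde X)^{-1}-I_m\bigr) = \tilde X\,\Gamma\,(I_m-\Gamma)^{-1},
\]
and taking norms gives the asserted bound on $\NORM{A^{-1}-\tilde X}_\rho$ (using $\NORM{\tilde X}_\rho\leq \NORM{\tilde X}_{\hatrho}$ to match the form stated).

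The third bound is a direct triangle inequality: $\NORM{A^{-1}}_\rho \leq \NORM{A^{-1}-\tilde X}_\rho + \NORM{\tilde X}_\rho$, combined with the previous estimate and the algebraic simplification
\[
\frac{\NORM{\tilde X}_\rho\,\NORM{\Gamma}_\rho}{1-\NORM{\Gamma}_\rho} + \NORM{\tilde X}_\rho = \frac{\NORM{\tilde X}_\rho}{1-\NORM{\Gamma}_\rho},
\]
after which the displayed upper bound $C_{\NF}(\rho,\hatrho)\NORM{A}_{\hatrho}\NORM{\tilde X}_{\hatrho}$ on $\NORM{\Gamma}_\rho$ is inserted into the denominator.

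There is no real obstacle here; the only mildly delicate point is the identification $\widetilde{A\tilde X}=I_m$, which works precisely because $\tilde X$ was defined as the DFT interpolant of the pointwise inverses, so the samples of the product land exactly on the constant $I_m$. Once that identification is in place, the argument is mechanical: multiplicative DFT error bound, Neumann inversion, triangle inequality.
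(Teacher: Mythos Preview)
Your argument is correct and is exactly the intended one. The paper does not spell out a proof for this corollary (it states the result and refers the reader to \cite{FiguerasHaroLuque2017} for complete proofs), but the route you take---identify $\widetilde{A\tilde X}=I_m$ from the fact that the grid samples of $A\tilde X$ are identically $I_m$, apply Corollary~\ref{cor:matrix:multi}, then run a Neumann series---is precisely the standard derivation behind the statement. One minor point worth making explicit: the passage from ``$A\tilde X$ invertible in $A(\bar\T^d_\rho,\C^{m\times m})$'' to ``$A$ invertible'' is justified pointwise (for each $\theta$, $\det A(\theta)\det\tilde X(\theta)\neq 0$, so $A(\theta)$ is invertible and $A^{-1}$ is analytic by Cramer's rule), not purely abstractly in a Banach algebra. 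You also correctly noticed that the algebraic identity in \eqref{norm_inverse} requires $\NORM{\tilde X}_\rho$ rather than $\NORM{\tilde X}_{\hatrho}$ in the first fraction; your tighter bound $\NORM{A^{-1}-\tilde X}_\rho\leq \NORM{\tilde X}_\rho\NORM{\Gamma}_\rho/(1-\NORM{\Gamma}_\rho)$ is what actually makes that line an equality.
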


\section{Methodology}
\label{sec::methodology}
We introduce now the methodology with which we can calculate the corresponding error bounds when implementing the validation theorem, especially by using the Fourier approximation results previously presented.

A crucial initial step involves meticulously selecting the inputs for validation. In our study, we will work with a real-analytic one-dimensional torus, which will be directly inputted into the algorithm as a truncated Fourier series (although the following procedure can be trivially generalized for a $d$-dimensional torus, adjusting grid sizes and everything else accordingly). These inputs can either originate from data already structured in this form or be prepared through preprocessing for convenience. Notably, the torus and other inputs are derived using the reducibility algorithm outlined in \cite{mamotreto, DelallaveHaro2006}.

For the validation process, we employed the Radix-2 DIT Cooley-Tukey algorithm to compute the Fast Fourier Transform (FFT), necessitating grid sizes that are powers of 2. This choice in algorithm ensures efficient computation and scalability, aligning with the requirements of our work \cite{cooley1965algorithm}. Thus, the Nyquist term is pre-set to 0. Note, though, that the presented formulas work for $N$ both even and odd, with the main difference that for odd $N$ there is no Nyquist term that needs to be taken care of.

In this manner, $K_0(\theta) = \widetilde{K_0(\theta)}$, and has the form
\begin{equation*}
   K_0(\theta)  = \sum\limits_{k=-\left[\frac{N-1}{2}\right]}^{\left[\frac{N-1}{2}\right]} \widetilde{K}_{0,k} \: \ee^{2 \pi \ii k \theta} \: .
\end{equation*}
Which leads easily to $K_0(\theta + \omega)$,
\begin{equation*}
    K_0(\theta + \omega) = \sum\limits_{k=-\left[\frac{N-1}{2}\right]}^{\left[\frac{N-1}{2}\right]} (\widetilde{K}_{0,k} \: \ee^{2 \pi \ii k \omega})  \: \ee^{2 \pi \ii k \theta} \: .
\end{equation*}
In the same manner, $P_1=\widetilde{P}_1$, $P_2 = \widetilde{P}_2$, $\Lambda_s = \Tilde{\Lambda}_s$ and $\Lambda_u = \Tilde{\Lambda}_u$ will be our real-analytic inputs in Fourier space with Nyquist term 0. Notice that since our inputs are already truncated Fourier series, there is no error committed when going to grid space and back via Fourier transform (as the FFT). We will also keep the same notation as in \secref{sec:setting} for our real-analytic map $F$ but changing the support point of the torus to $\theta$, as this will allow to more easily work with the application of rotations. Nonetheless, with the aim of applying the results from \secref{sec::Fourier}, $F$ will be defined over a slightly thicker domain, namely $D_{\hatrho,R}$ for a $\hatrho > \rho$.

\subsection{Boxes Method}\label{Boxes Method}
When it comes to rigorous numerics, interval arithmetic is usually the standard way to proceed. This will allow us to enclose small values that go beyond the computer's representation capabilities in an interval, easing its manipulation. In this manner, and using the aforementioned inputs, we can compute the bounds that appear from $K_0$ in \thmref{thm::validation}. Such bounds require us to compute $\rho$-norms, or supremum norms on a complex neighborhood of the base torus (or more practically, the grid). This can be troublesome, as we would need to evaluate non-truncated objects like, for example, $F(K_0(\theta), \theta)$ over a complex neighborhood of our grid using the computer. In some cases, we can easily estimate such norm by raw bounding using the Fourier norm. But in other cases in which we may not have the desired object in Fourier space, we have to resort to another method. The boxes method allows us to extend the grid to the complex space by creating complex boxes $C_j = \{\theta_j + \varphi \st |\re \:\varphi | \leq \frac{1}{2N}$, $|\im \: \varphi | \leq \rho\}$ (notice that the choice of $\rho$ will depend on the context). These boxes are easily created in the computer as they are the cartesian product of two intervals (the real part and the imaginary part). For the computation of the norm, and following the previous example, first we will have to calculate the image of such boxes through $K$ before applying $F$. Thus, we first need to calculate $K_0(\theta + \varphi)$. Notice that this is no more than rotating the torus as we have done before, but this time the rotation is complex. Again, this is easier done in Fourier series form given the symmetry of the coefficients thanks to the real-analyticity of $K_0$:
\begin{equation*}
    K_0(\theta + \varphi) = \sum\limits_{k=-\left[\frac{N-1}{2}\right]}^{\left[\frac{N-1}{2}\right]} (\widetilde{K}_{0,k} \: \ee^{2 \pi i k \varphi})  \: \ee^{2 \pi i k \theta} = \sum\limits_{k=-\left[\frac{N-1}{2}\right]}^{\left[\frac{N-1}{2}\right]} \left(\frac{\widetilde{K}_{0,k}}{\ee^{2 \pi k \im \varphi}} \: \ee^{2 \pi i k \re \varphi}\right)  \: \ee^{2 \pi i k \theta} \: .
\end{equation*}
Finally, we just have to go back to grid space via IFFT. With this new torus, we can calculate $F(K(C_j), C_j)$ for each $j$. The norm of the object will then consist of the supremum out of all the new boxes.

\subsection{The Invariance Error Bound}

For the first bound, we can write the error produced in the invariance equation as
\begin{equation*}
E(K_0)(\theta) = F(K_0(\theta), \theta) - K_0(\theta + \omega) \: .
\end{equation*}

Since the Fourier series of $F(K_0(\theta), \theta)$ is an infinite sum, we would like to approximate it by a finite sum where, in an abuse of notation, $\widetilde{F}_{0,k}$ will indicate the $k$th Fourier coefficient of $F(K_0(\theta), \theta)$;
\begin{equation*}
    \reallywidetilde{F(K_0(\theta), \theta)} = \sum\limits_{k=-\left[\frac{N}{2}\right]}^{\left[\frac{N-1}{2}\right]} \widetilde{F}_{0,k} \: \ee^{2 \pi \ii k \theta} \: .
\end{equation*}
This allows us to split the computation of the error into two parts:
\begin{equation*}
    \|E(K_0)(\theta)\|_\rho \leq \| F(K_0(\theta), \theta ) - \reallywidetilde{F(K_0(\theta), \theta)} \|_\rho+ \| \reallywidetilde{F(K_0(\theta), \theta)} - K_0(\theta + \omega) \|_\rho \leq \ep \: ,
\end{equation*}
where the last term of the sum can be easily bounded by the Fourier norm:
$$ \| \reallywidetilde{F(K_0(\theta), \theta)} - K_0(\theta + \omega) \|_\rho \leq  \| \reallywidetilde{F(K_0(\theta), \theta)} - K_0(\theta + \omega) \|_{F, \rho},$$
since it is a difference of truncated Fourier series and hence a truncated Fourier series with differences in the coefficients.
For the first term and for a given $\hatrho>\rho$ such that $\calK_{\hatrho} \subset D_{\hatrho,R}$, where recall $D_{\hatrho,R}$ is the domain of $F$, using \thmref{FourierEstimate}, we have
\begin{equation*}
    \| F(K_0(\theta), \theta) - \reallywidetilde{F(K_0(\theta), \theta)} \|_\rho \leq C_N(\rho, \hatrho) \: \|F(K_0(\theta), \theta )\|_{\hatrho} \ ,
\end{equation*}
where the last term will require the boxes method described before in Subsection \ref{Boxes Method} and a suitable choice for $\hatrho$.

\subsection{The Reducibility and Invertibility Error Bounds}
The next bound to be computed is the reducibility error bound, where the reducibility error is given by
\begin{equation*}
    E_\text{red}(\theta) = P_2(\theta + \omega) M_0(\theta) P_1(\theta) - \Lambda (\theta) \: ,
\end{equation*}
with $M_0(\theta) = \Dif_z F(K_0(\theta), \theta)$. Recall that we already have $P_1$, $P_2$, $\Lambda_s$ and $\Lambda_u$ (and therefore $\lambda$) as inputs. Using \corref{cor:matrix:multi}, the norm of the reducibility error is then
\begin{align*}
    \| E_\text{red} (\theta)\|_\rho &= \| P_2(\theta+\omega) M_0(\theta) P_1(\theta) - \Lambda (\theta) \|_\rho \\
    &\leq \|P_2(\theta+\omega) M_0(\theta) P_1(\theta) - \reallywidetilde{P_2(\theta+\omega) M_0(\theta) P_1(\theta)}\|_\rho \\
    &\qquad + \|\reallywidetilde{P_2(\theta+\omega) M_0(\theta) P_1(\theta)} - \Lambda(\theta)\|_\rho \\
    &\leq C_N(\rho, \hatrho) \|P_2(\theta)\|_{\hatrho} \|M_0(\theta)\|_{\hatrho} \|P_1(\theta)\|_{\hatrho}\\
    &\qquad + \|\reallywidetilde{P_2(\theta+\omega) M_0(\theta) P_1(\theta)} - \Lambda(\theta)\|_{F,\rho} \\
    &= \ep_1 ,
\end{align*}
where the last term remains the same since it is the norm of the difference of a matrix of truncated Fourier series, and a constant matrix (given that $\Lambda(\theta)$ is an input).
\begin{remark}
    Every time that a result of \thmref{FourierEstimate} is used (such as the derived corollaries), a certain $\hatrho > \rho$ is chosen. This means that for every computation of a bound  in which we use those results, a different $\hatrho$ can be chosen. However, we will play with $\rho$ and $\hatrho$ in order to find the optimal value of $C_N(\rho, \hatrho)$, and once found, it will be used in every calculation, meaning that we will keep the same choice of $\hatrho$ throughout.
\end{remark}
For the purpose of finding the $\sigma$ bound in the validation theorem, we require the value of $\lambda$. In our case, the $\Lambda$ matrix will be a constant matrix as explained in Section \ref{sec:setting} and therefore the value $\lambda$ is given. However, this is not always the case, as other types of fixed points may lead to non-constant $\Lambda(\theta)$, requiring then a way to estimate a $\lambda$ value. For such purpose (and although it is not necessary for our case), we illustrate here how to find such a value.
Notice that
\begin{equation*}
    \|(\Lambda_u)^{-1}\|_\rho \leq \|(\Lambda_u)^{-1} - \widetilde{(\Lambda_u)^{-1}}\|_\rho + \|\widetilde{(\Lambda_u)^{-1}}\|_\rho \leq \|(\Lambda_u)^{-1} - \widetilde{(\Lambda_u)^{-1}}\|_\rho + \|\widetilde{(\Lambda_u)^{-1}}\|_{F, \rho} \: .
\end{equation*}
By \corref{cor:matrix:inv},
\begin{equation*}
    \|(\Lambda_u)^{-1} - \widetilde{(\Lambda_u)^{-1}}\|_\rho \leq \frac{\|\widetilde{(\Lambda_u)^{-1}}\|_{\hatrho} \: \|\Gamma\|_\rho}{1- \|\Gamma\|_\rho}
\end{equation*}
where $\Gamma(\theta) = \Id_{n_U} - \Lambda_u(\theta) \: \reallywidetilde{(\Lambda_u(\theta))^{-1}}$ as used in \corref{cor:matrix:inv}, with
\begin{equation*}
    \|\Gamma\|_{\rho} \leq C_N(\rho, \hatrho) \: \|\Lambda_u\|_{\hatrho} \: \| \widetilde{(\Lambda_u)^{-1}} \|_{\hatrho} \: .
\end{equation*}
All together
\begin{align*}
    \|(\Lambda_u)^{-1} - \widetilde{(\Lambda_u)^{-1}} \|_\rho
    &\leq  \frac{C_N(\rho, \hatrho) \: \|\Lambda_u\|_{F, \hatrho} \: \|\widetilde{(\Lambda_u)^{-1}}\|_{F, \hatrho}^2}{1 - C_N(\rho, \hatrho) \: \|\Lambda_u\|_{F, \hatrho} \: \|\widetilde{(\Lambda_u)^{-1}}\|_{F, \hatrho}} \: .
\end{align*}
The inequality holds due to the fact that $\Lambda_u$ is an input and already a matrix of truncated Fourier series. Thus, following formula \eqref{norm_inverse}, we can take
\begin{equation*}
    \|(\Lambda_u)^{-1}\|_\rho \leq \frac{\|\widetilde{(\Lambda_u)^{-1}}\|_{F, \hatrho}}{1 - C_N(\rho, \hatrho) \: \|\Lambda_u\|_{F, \hatrho} \: \|\widetilde{(\Lambda_u)^{-1}}\|_{F, \hatrho}} = \lambda_u .
\end{equation*}

Next, we seek a bound for the invertibility error. Its norm can be computed using the same procedure:
    \begin{align*}
    \|E_\text{inv}(\theta) \|_\rho &\leq  \|P_2(\theta) P_1(\theta) - \reallywidetilde{P_2(\theta) P_1(\theta)}\|_\rho + \|\reallywidetilde{P_2(\theta) P_1(\theta)} - \Id\|_\rho \\
    &\leq C_N(\rho, \hatrho) \: \|P_2(\theta)\|_{\hatrho} \: \|P_1(\theta)\|_{\hatrho} +\|\reallywidetilde{P_2(\theta) P_1(\theta)} - \Id\|_{F,\rho} \\
    &= \ep_2 .
\end{align*}

\subsection{Computation of $b(r)$}
Only remains the last condition of \thmref{thm::validation}. Recall that condition \ref{condC} required $\forall (z, \theta) \in \Bar{D}_{\rho,r}$, $\NORM{\Dif_z F(z, \theta) - \Dif_z F(K_0(\theta), \theta)}_\rho \leq b(r) \norm{z-K_0(\theta)}$. This means that we need a way of computing such $b(r)$. Notice that for all $\theta \in \bar{\T}_{\rho}$
\begin{align*}
    \| \Dif_z &F(z, \theta) - \Dif_z F(K_0(\theta), \theta)\|_\rho\\
    &= \left\| \int_0^1 \frac{\dif}{\dif t} \Dif_z F(K_0(\theta) + t(z-K_0(\theta)), \theta) \ \dif t \right\| _\rho \\
    &\leq \int_0^1 | \Dif^2_z F(K_0(\theta) + t(z-K_0(\theta)), \theta) | \dif t \cdot | z - K_0(\theta) | \\
    &\leq \sup_{\substack{\theta \in \bar{\T}_\rho \\ |y-K_0(\theta)|\leq r}} |\Dif^2_z F(y, \theta)| \cdot | z - K_0(\theta) | .
\end{align*}
So, we can take
\begin{equation*}
    b(r) = \max_{i=1,\ldots,n} \sup_{(y, \theta) \in \Bar{D}_{\hatrho,R}} |\Dif^2_z F^i(y, \theta)| ,
\end{equation*}
where $F^i$ represents the $i$th component of $F$, as the maximum of the norms of each component of $F$ as bilinear maps. In fact, we compute $b(R)$, as $b(r) \leq b(R)$. The second derivative of $F$ is then evaluated over points in the domain. Such domain is the points $R$-close to the approximate torus over a $\rho$-thick complex base torus, $\Bar{\T}_{\rho}$. This means that, in our case, we will have to apply the boxes method to compute the second derivative, that is, thickening the torus by $R$ in all directions after thickening it by $\rho$ in the complex direction.

\section{Algorithm and Results}
The computations explained in Section \ref{sec::methodology} have been implemented in C++ using interval arithmetics. Two different classes have been created for such purpose: \textit{ComplexInterval}, which has two members, an interval representing the real part and an interval representing the imaginary part of a complex value, and \textit{RealInterval}, which comprises  a single member, an interval. Methods for each class have been implemented such that basic operations and operator overloads are covered, easing the way into building more complex functions as the FFT.

\subsection{Case of Study}
The objects that will serve as starting points for the validation are approximate tori computed through a reducibility method (see \cite{DelallaveHaro2006} for more details). In this study, we take $f$ to be an irrational rotation as mentioned and $F$ as the perturbed standard map. Thus, $\hat{F}$ is $(F, f) : \mathbb{C}^2 \times \mathbb{T}_\C \rightarrow \mathbb{C}^2 \times \mathbb{T}_\C $ given by
\begin{align}
\begin{cases}
f(\theta) = \theta + \omega, \\
F^1(x, y) = x+y - \frac{\kappa}{2 \pi} \sin(2 \pi x) - \epsilon \sin(2 \pi \theta),\\
F^2(x, y) = y - \frac{\kappa}{2 \pi} \sin(2 \pi x) - \epsilon \sin(2 \pi \theta) .
\end{cases}
\label{eq:stdMap}
\end{align}
For a given rotation $\omega\in\R\setminus\Q$, and fixed $\kappa,\epsilon\in \R$, we search for a real-analytic parameterization of a torus $K: \bar\T_\rho \to \C^2$, joint with changes of variable matrices $P_2, P_1: \bar\T_\rho \to \C^{2\times2}$ that reduce the linearized dynamics to $\Lambda: \bar\T_\rho \to \C^{2 \times 2}$ such that $\Lambda = \text{diag} (\Lambda_s, \Lambda_u)$ is constant, and $\lambda_s = |\Lambda_s|$ and $\lambda_u = |\Lambda_u^{-1}|$. That is, we plan to solve the equations
\begin{itemize}
    \item $F(K(\theta - \omega), \theta - \omega) - K(\theta) = 0$ ,
    \item $\Dif F(K(\theta - \omega), \theta - \omega) P_1(\theta - \omega) - P_1(\theta) \Lambda = 0$ ,
    \item $\Id - P_1(\theta) P_2(\theta) = 0$ .
\end{itemize}
These are the equations that of course lead to $E$, $E_\text{red}$, and $E_\text{inv}$ when applied to an approximation $K_0$.

As for the rotation value, we take the golden mean $\omega = \frac{1}{2}(\sqrt{5}-1)$, which is the ``most irrational number" that is used in many works on the existence of quasi-periodic motion (see, e.g., the pioneering work \cite{greene1979method} and other references mentioned in the introduction). Fixing $\kappa \in (0,4)$, for $\epsilon =0$, $K_0(\theta) = (\frac{1}{2}, 0)$ is a fiberwise hyperbolic invariant torus. For $\epsilon>0$ small enough, the torus maintains its hyperbolicity up to a certain critical value $\epsilon_c$, where the linear dynamics of the invariant torus are no longer reducible to diagonal (this is known as the phenomenon of bundle collapse, implying the destruction of the invariant curve, see \cite{haroBreakdown, DelallaveHaro2006, haroTrilogy}). Hence, the validation is possible, in principle, for $\epsilon \in [0, \epsilon_c)$.

In our case, we fixed $\kappa = 1.3$ such that the invariant tori start to lose hyperbolicity as $\epsilon$ approaches a critical value of $\epsilon_c \approx 1.2352755$, as explained in \cite{FiguerasHaro2012,mamotreto}, which means that the validation will get more nuanced and probably require a finer tuning of the parameters such as grid size, width of the complex band, and so on. This can be easily seen on Figure \ref{fig:tori}, as the graph of the torus starts to fractalize as it approaches the critical value. These are the tori that are validated in the following section.

\begin{figure}[htbp]
    \centering
    \begin{subfigure}{0.45\textwidth}
        \centering
        \includegraphics[width=.89\textwidth]{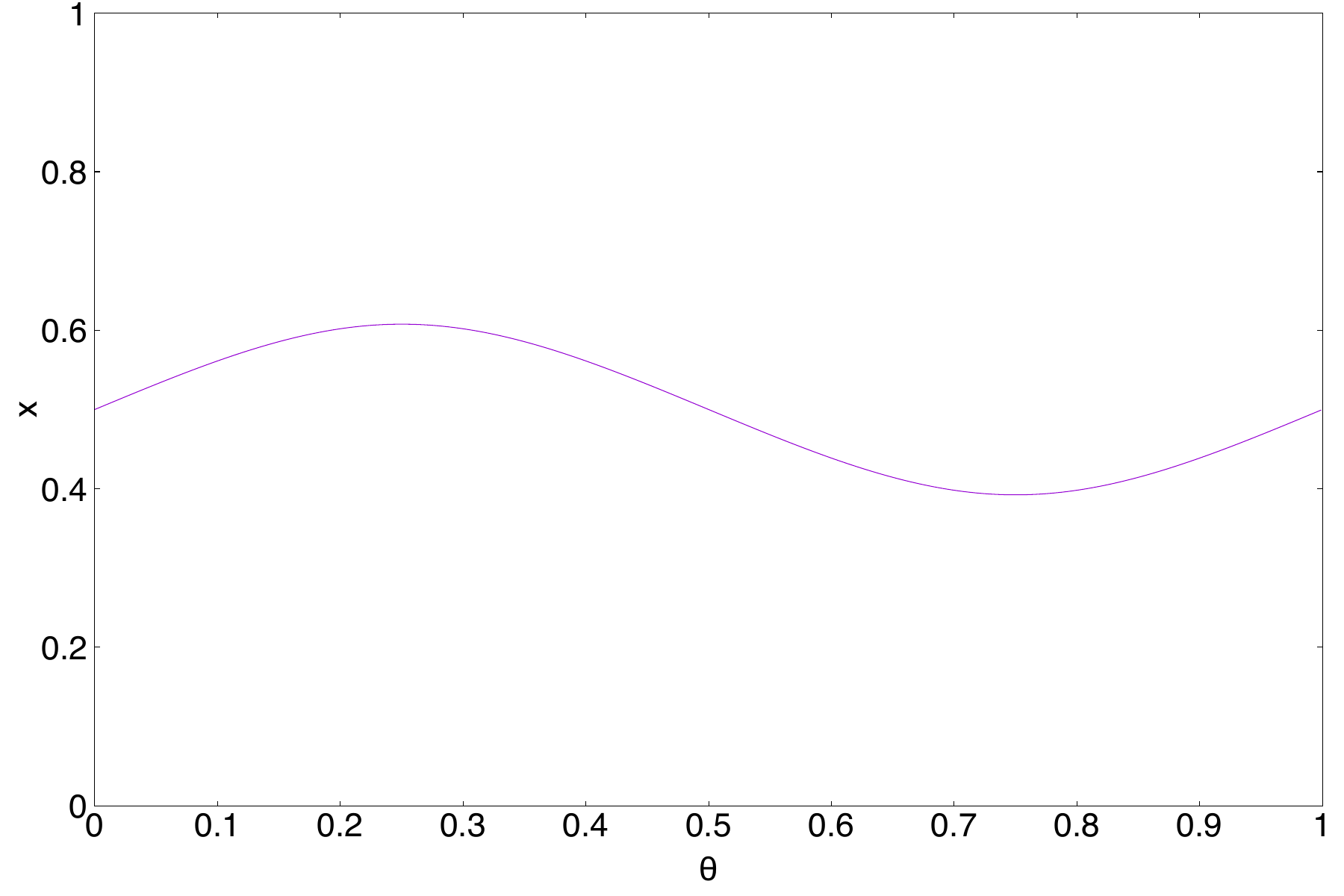}
        \caption{x-projection of the torus, $\epsilon = 0.5$.}
        \label{fig:1a}
    \end{subfigure}
    \hspace{0.5cm}
    \begin{subfigure}{0.45\textwidth}
        \centering
        \includegraphics[width=.89\textwidth]{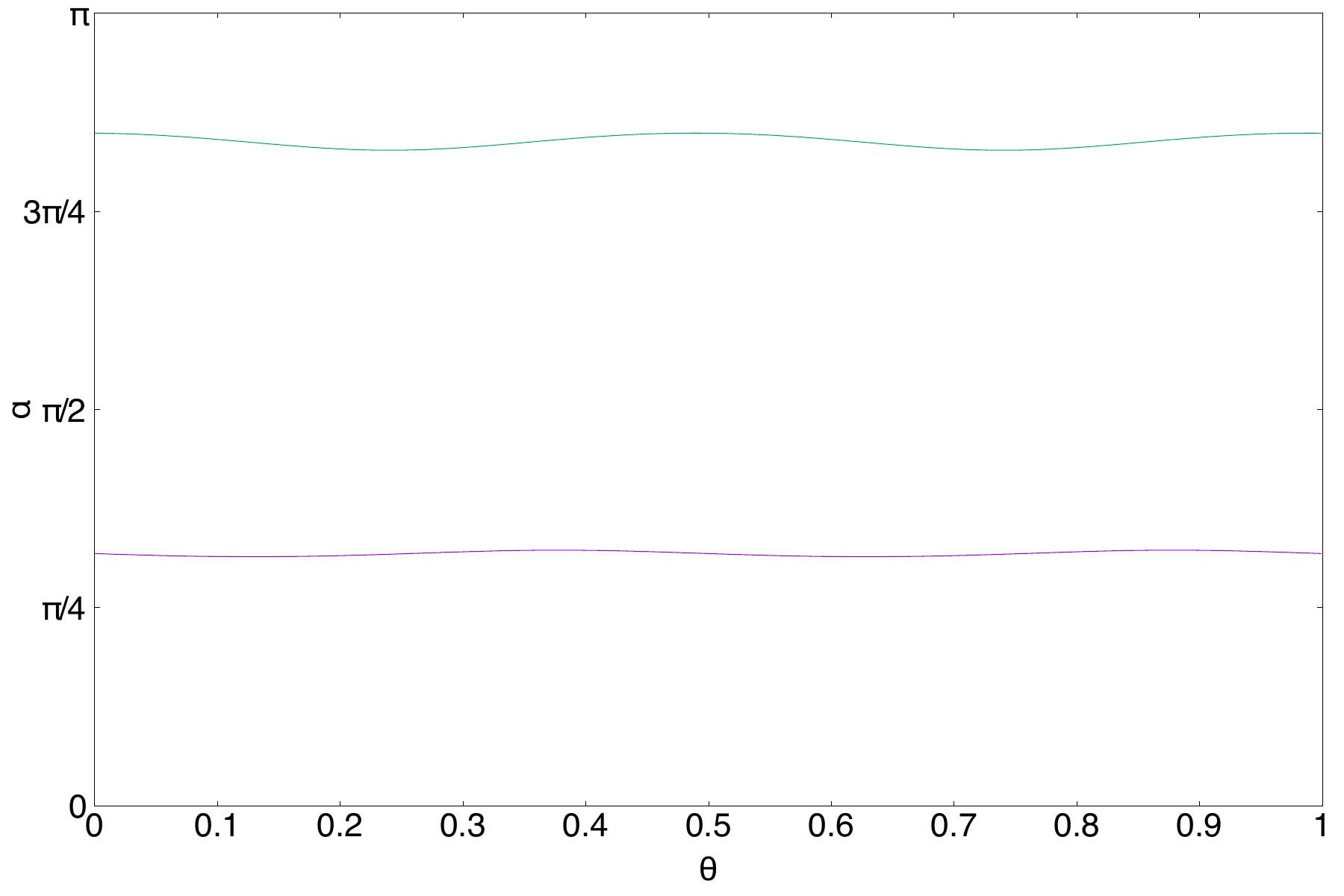}
        \caption{Invariant subbundles, $\epsilon = 0.5$.}
        \label{fig:1b}
    \end{subfigure}

    \medskip

    \begin{subfigure}{0.45\textwidth}
        \centering
        \includegraphics[width=.89\textwidth]{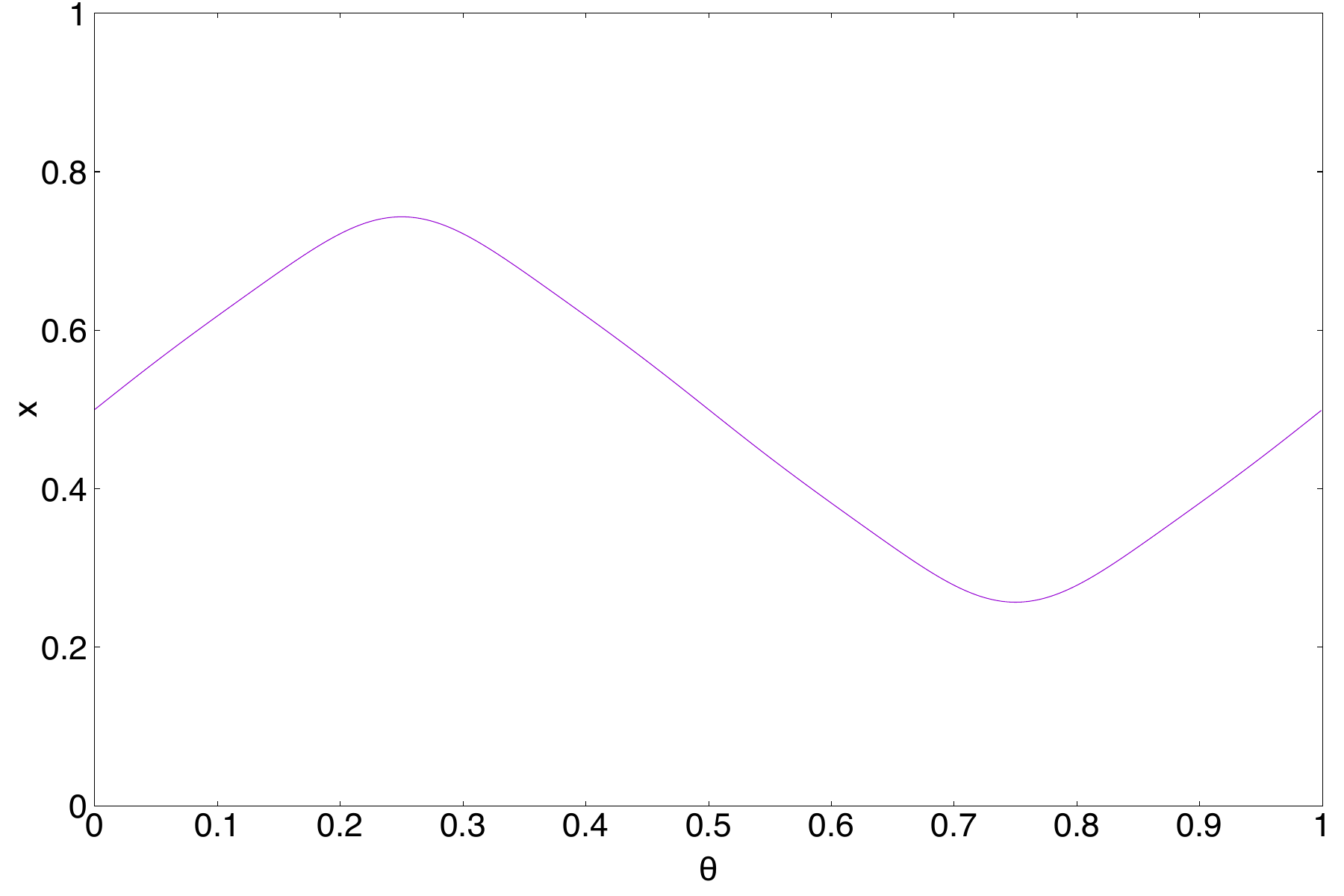}
        \caption{x-projection of the torus, $\epsilon = 1$.}
        \label{fig:1c}
    \end{subfigure}
    \hspace{0.5cm}
    \begin{subfigure}{0.45\textwidth}
        \centering
        \includegraphics[width=.89\textwidth]{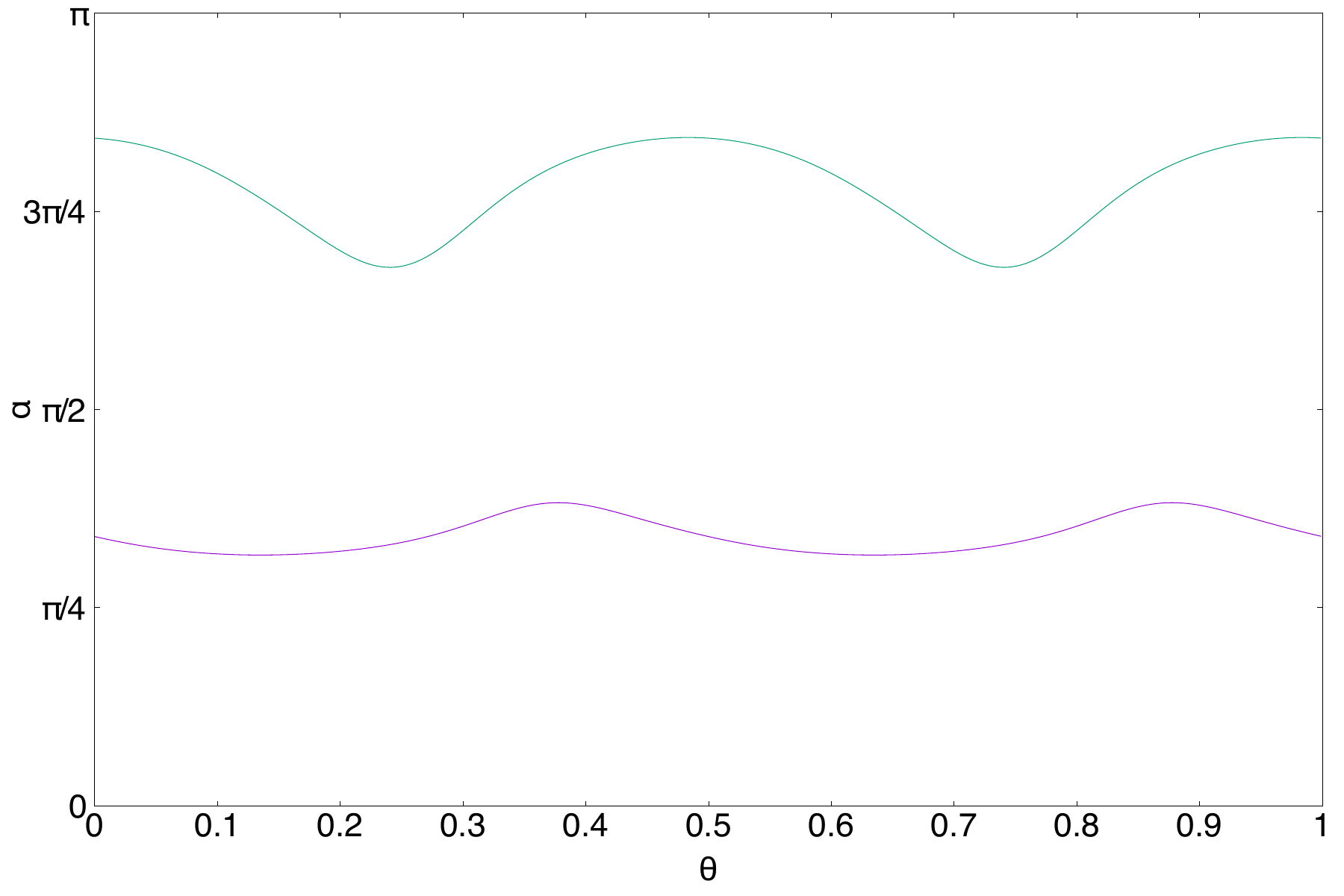}
        \caption{Invariant subbundles, $\epsilon = 1$.}
        \label{fig:1d}
    \end{subfigure}

    \medskip

    \begin{subfigure}{0.45\textwidth}
        \centering
        \includegraphics[width=.89\textwidth]{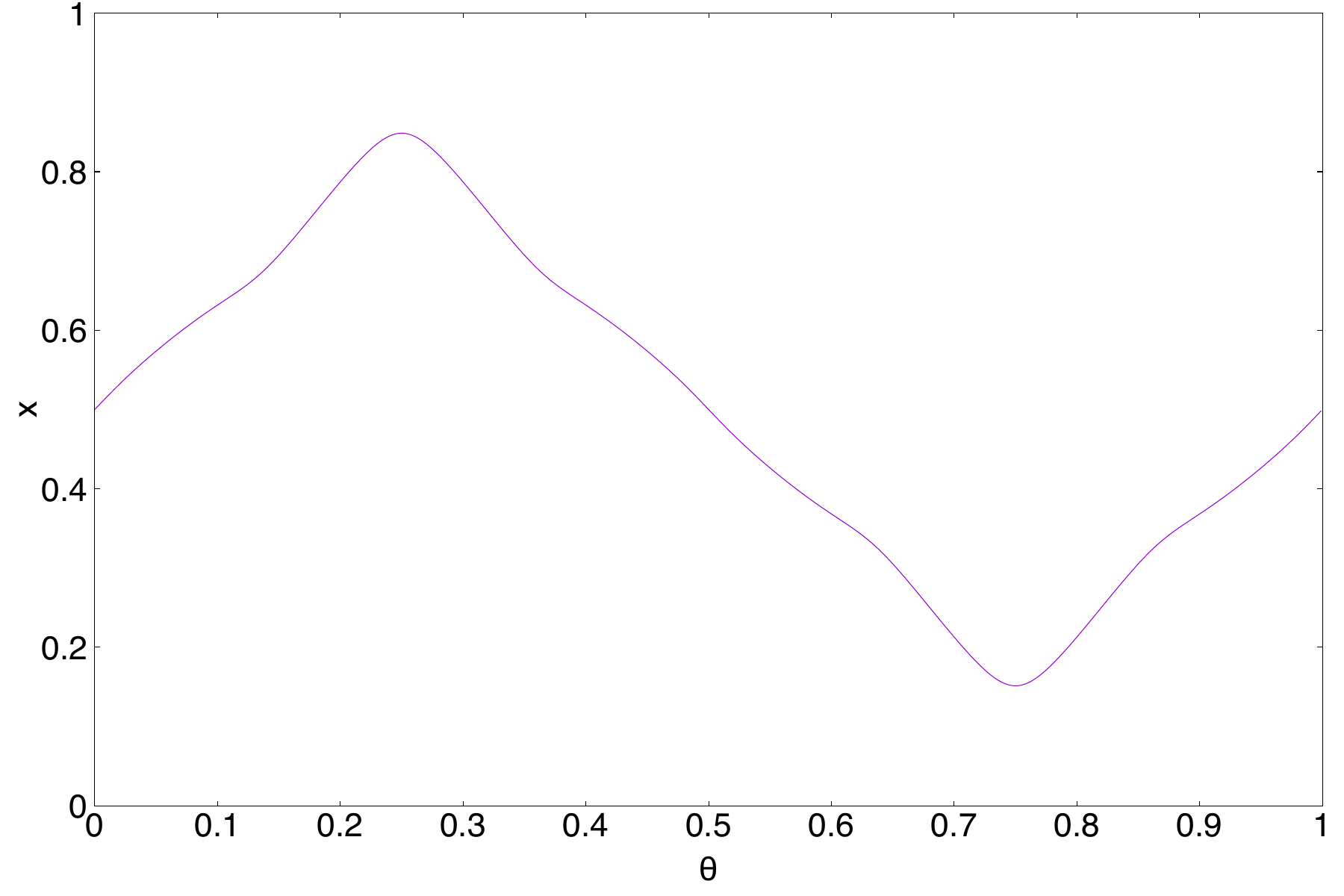}
        \caption{x-projection of the torus, $\epsilon = 1.2$.}
        \label{fig:1e}
    \end{subfigure}
    \hspace{0.5cm}
    \begin{subfigure}{0.45\textwidth}
        \centering
        \includegraphics[width=.89\textwidth]{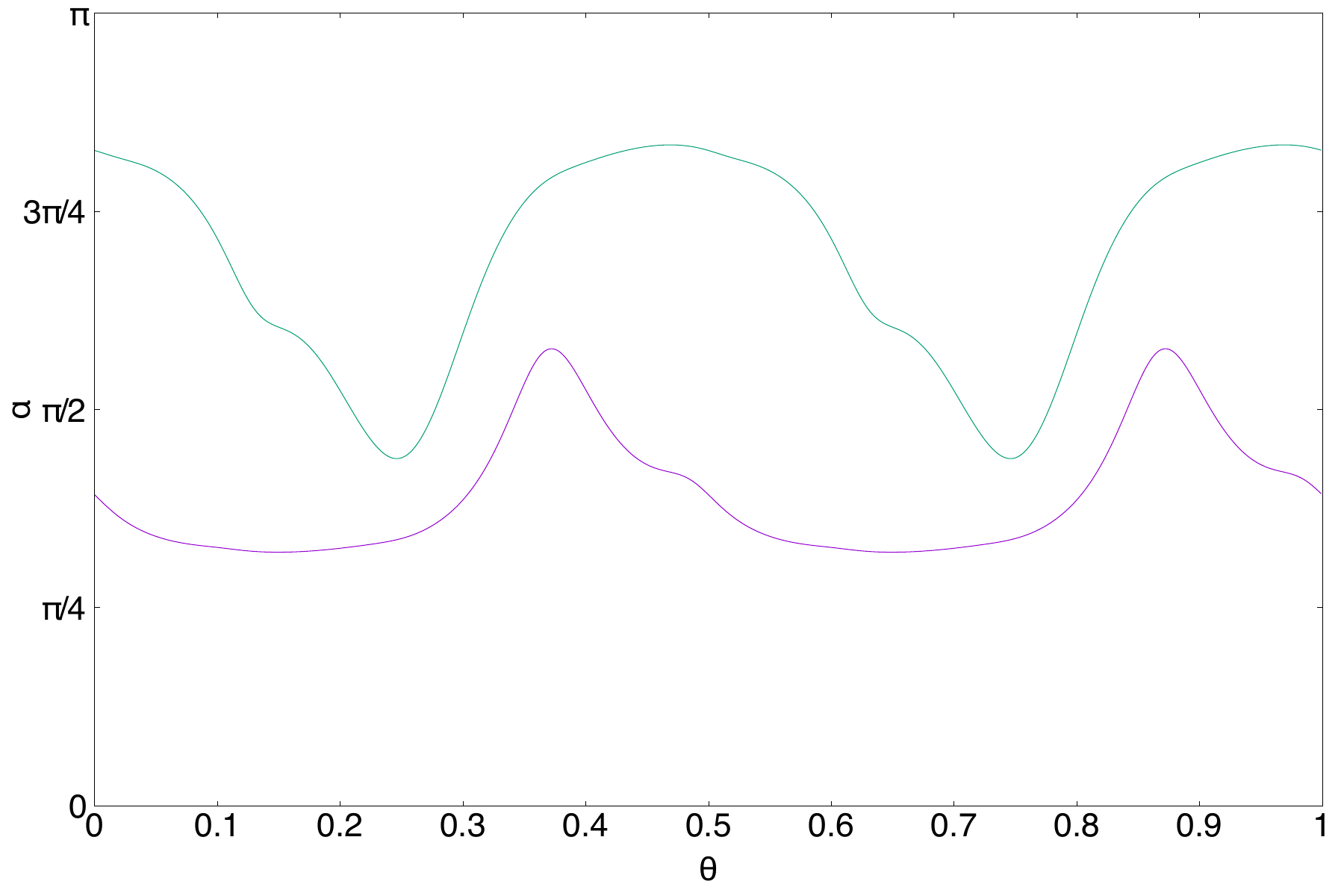}
        \caption{Invariant subbundles, $\epsilon = 1.2$.}
        \label{fig:1f}
    \end{subfigure}

    \medskip

    \begin{subfigure}{0.45\textwidth}
        \centering
        \includegraphics[width=.89\textwidth]{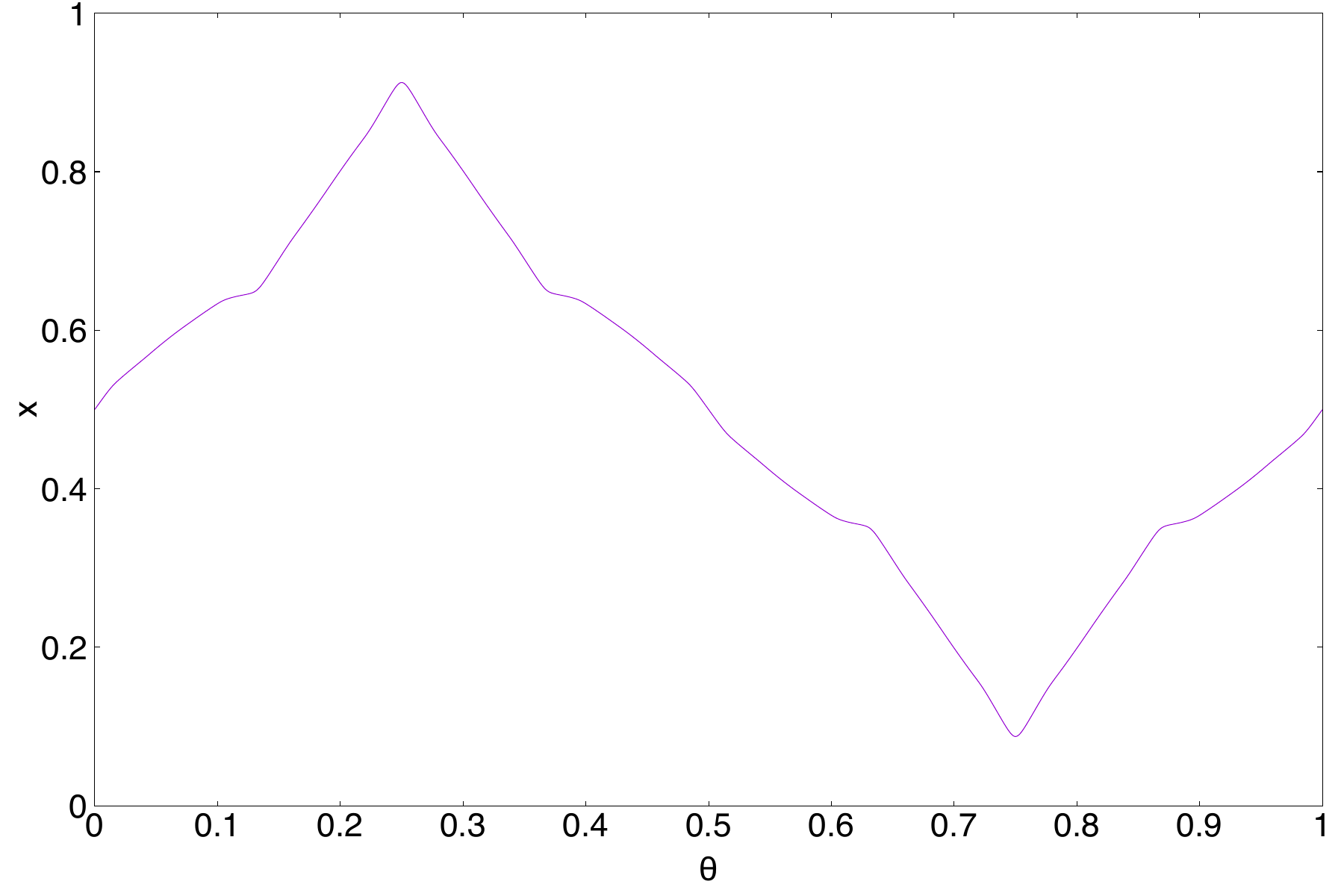}
        \caption{x-projection of the torus, $\epsilon = 1.2342$.}
        \label{fig:1g}
    \end{subfigure}
    \hspace{0.5cm}
    \begin{subfigure}{0.45\textwidth}
        \centering
        \includegraphics[width=.89\textwidth]{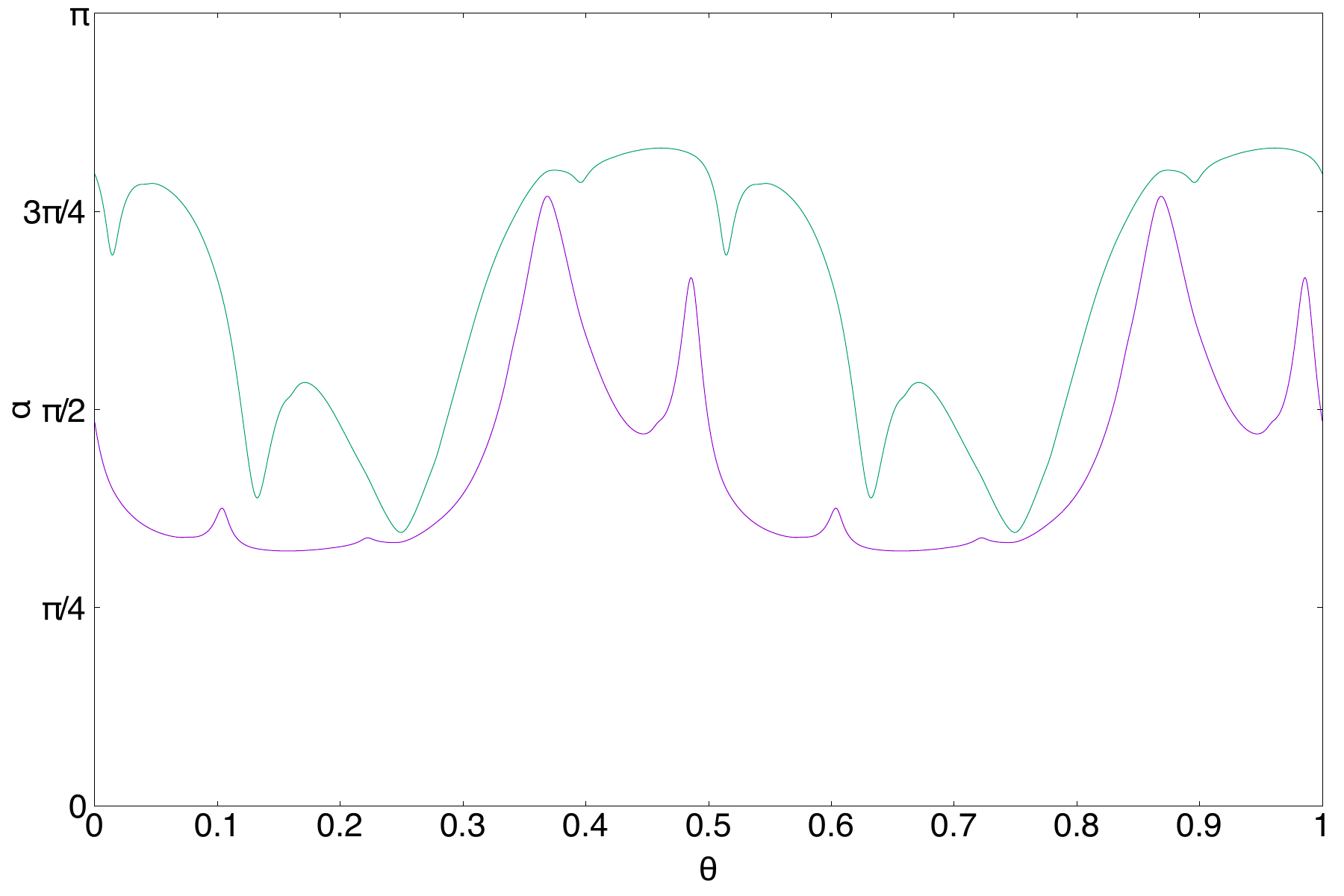}
        \caption{Invariant subbundles, $\epsilon = 1.2342$.}
        \label{fig:1h}
    \end{subfigure}
        \captionsetup{width=\textwidth}
    \caption{Invariant tori (left) and their projectivized invariant subbundles (right) for several values of $\epsilon$ for the standard map \eqref{eq:stdMap}. On the left is displayed the x-projection of the invariant tori and on the right the angle ($\alpha$) between the stable and unstable subbundles and the semiaxis $x>0$.}
    \label{fig:tori}
\end{figure}

\subsection{Validation Examples}

As mentioned in \secref{sec::methodology}, our inputs for the validation are $K_0$, $P_1$, $P_2$, and $\Lambda$. These inputs have been computed with double precision and therefore the validation will be restrained to the maximum precision of double types. It is also worth noting that when we look at the Fourier coefficients of the inputs, we see that after the initial decay of the coefficients, the values get very small. However, when computing Fourier norms, all those values are being multiplied by big exponentials (depending also on $N$), which can amplify that noise and make it significant. That is why it may be a good idea to cut those noise values off and keep only a reasonable neighborhood of the first coefficients. This will of course shrink the grid size but ensure our bounds are not dirtied up by unnecessary noise.

The first term to find then, depending on such inputs, is $\rho$, which, after a choice of $\hatrho$, allows us to compute $C_N(\rho, \hat{\rho})$. We can find a good upper bound estimate for which we can take the value of $\rho$. Notice how the
modulus of the Fourier coefficients of our input torus $K_0$ decays at an exponential rate of $2 \pi N \rho^*$ (times a constant). Therefore, we can perform an exponential regression on such coefficients to find an upper bound $\rho^*$ of the $\rho$ we can use for our validation. In Figure \ref{fig:Kfit}, we can see how the slope $\rho^*$ of the black lines fit the coefficient decay. In a similar way, we can see in Figure \ref{fig:Pfit} that the upper bound for fitting the coefficients of the noisier $P_1$ is going to be higher than for $K_0$.

\begin{figure}
    \begin{subfigure}{0.45\textwidth}
        \centering
        \includegraphics[width=\textwidth]{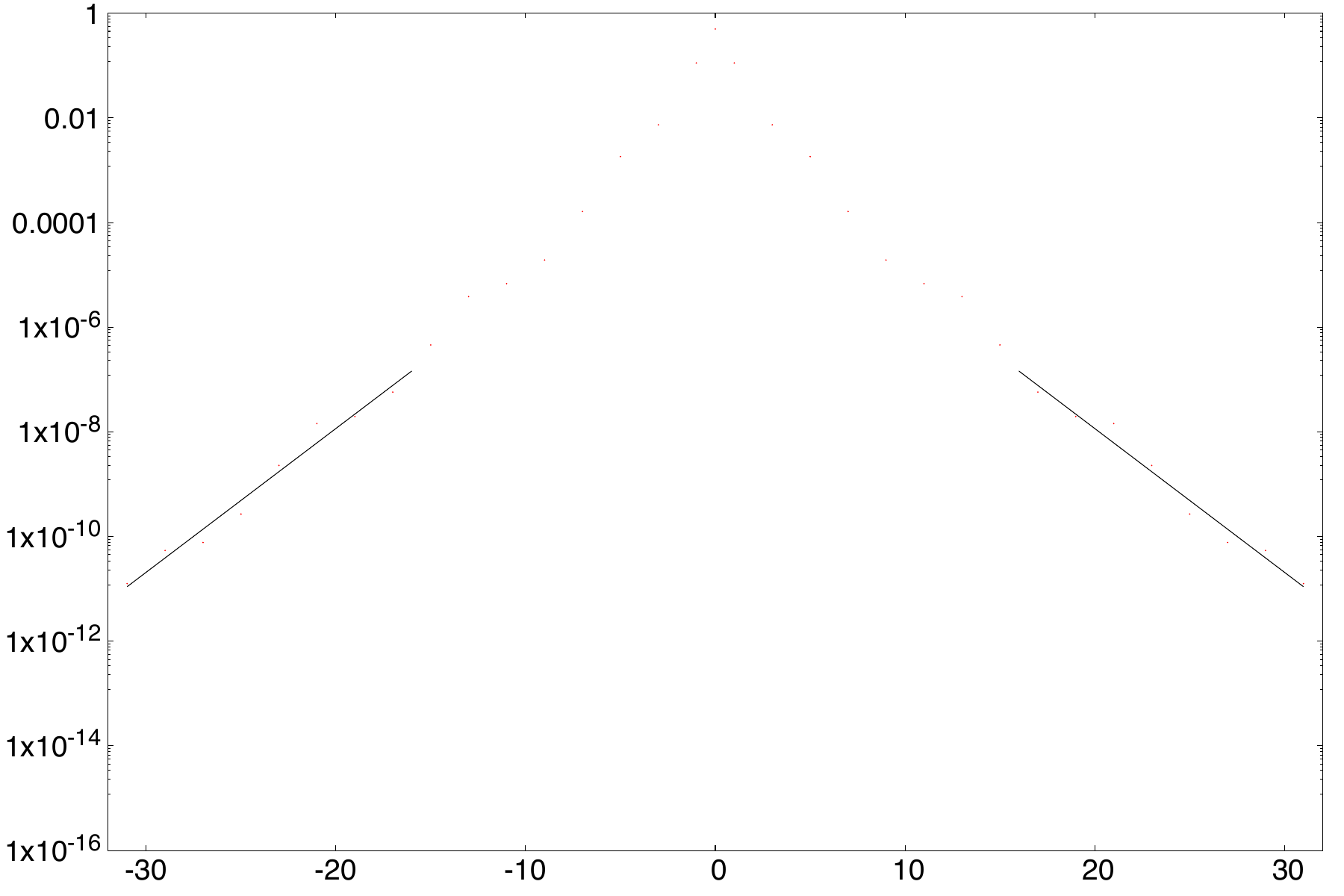}
        \caption{Fourier coefficients and the fitted curve for $\rho^* \approx 0.1$ at $\epsilon = 1$.}
    \end{subfigure}
        \hspace{0.5cm}
    \begin{subfigure}{0.45\textwidth}
        \centering
        \includegraphics[width=\textwidth]{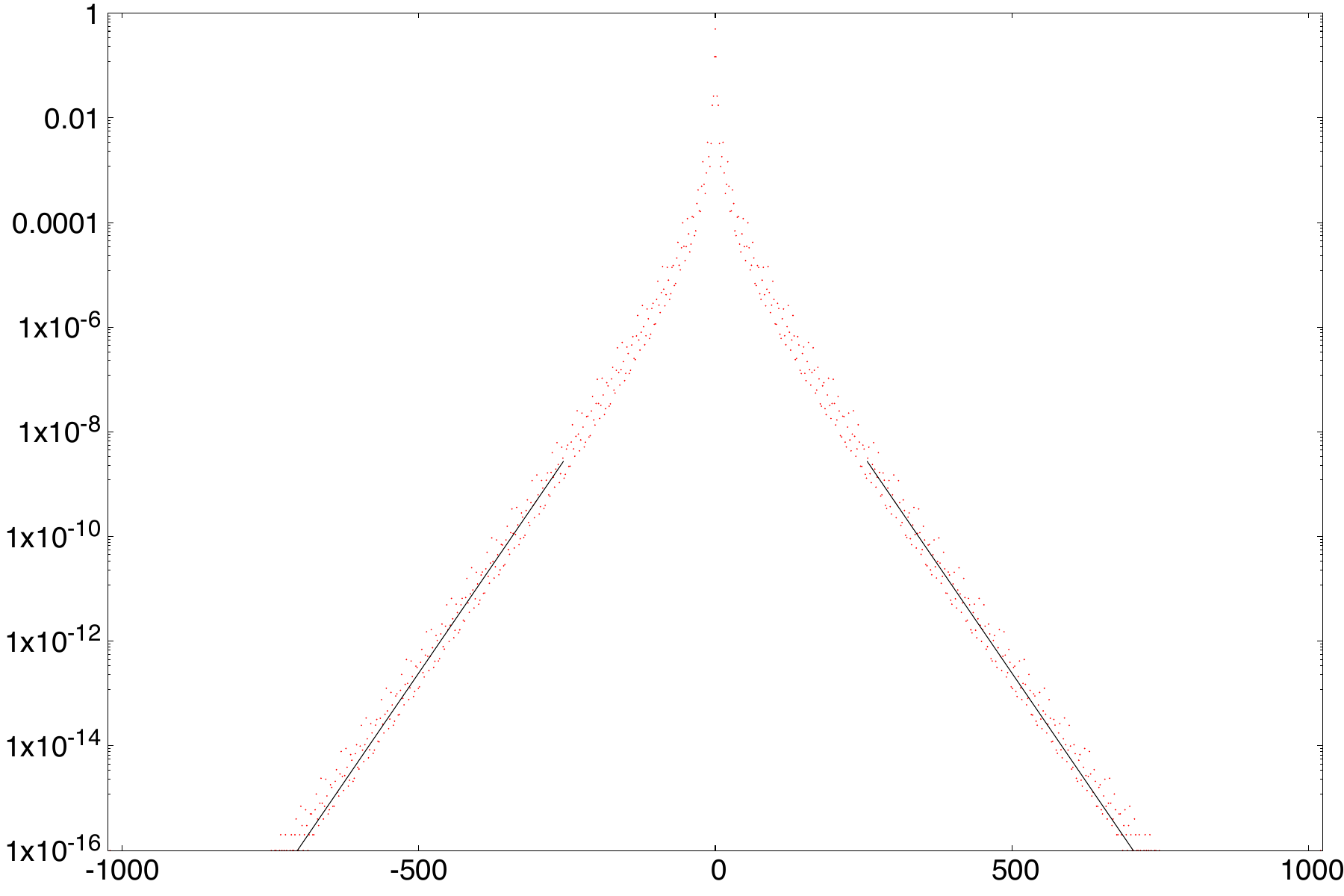}
        \caption{Fourier coefficients and the fitted curve for $\rho^* \approx 0.006$ at $\epsilon = 1.2342$.}
    \end{subfigure}
    \captionsetup{width=\textwidth}
    \caption{Fourier coefficients in a logarithmic scale of the x component of the torus for several values of $\epsilon$ (red) and a regression line of parameter $\rho^*$ fitting the coefficients (black).}
    \label{fig:Kfit}
\end{figure}

\begin{figure}
    \begin{subfigure}{0.45\textwidth}
        \centering
        \includegraphics[width=\textwidth]{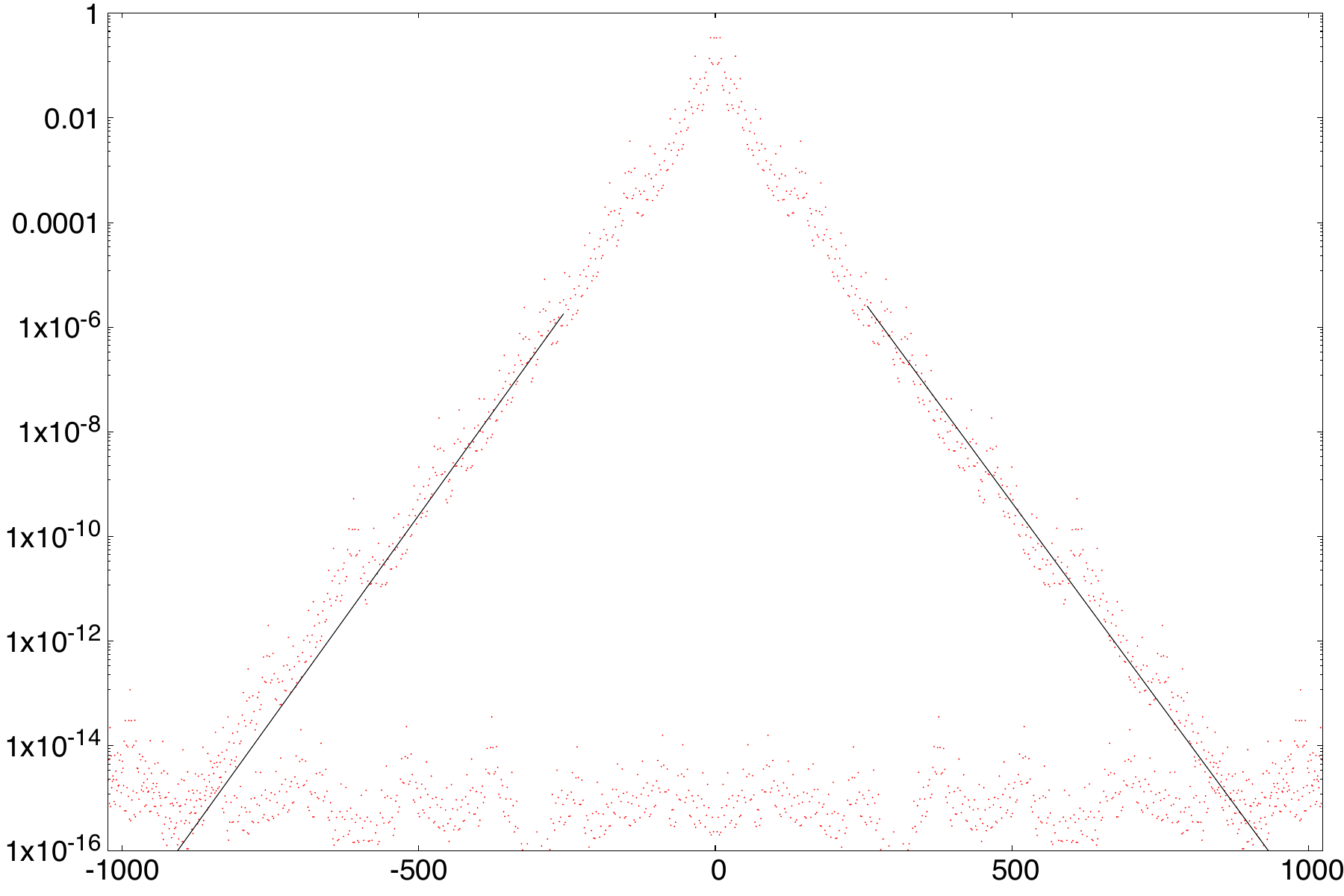}
        \caption{Fourier coefficients and the fitted curve for the stable subbundle.}
    \end{subfigure}
        \hspace{0.5cm}
    \begin{subfigure}{0.45\textwidth}
        \centering
        \includegraphics[width=\textwidth]{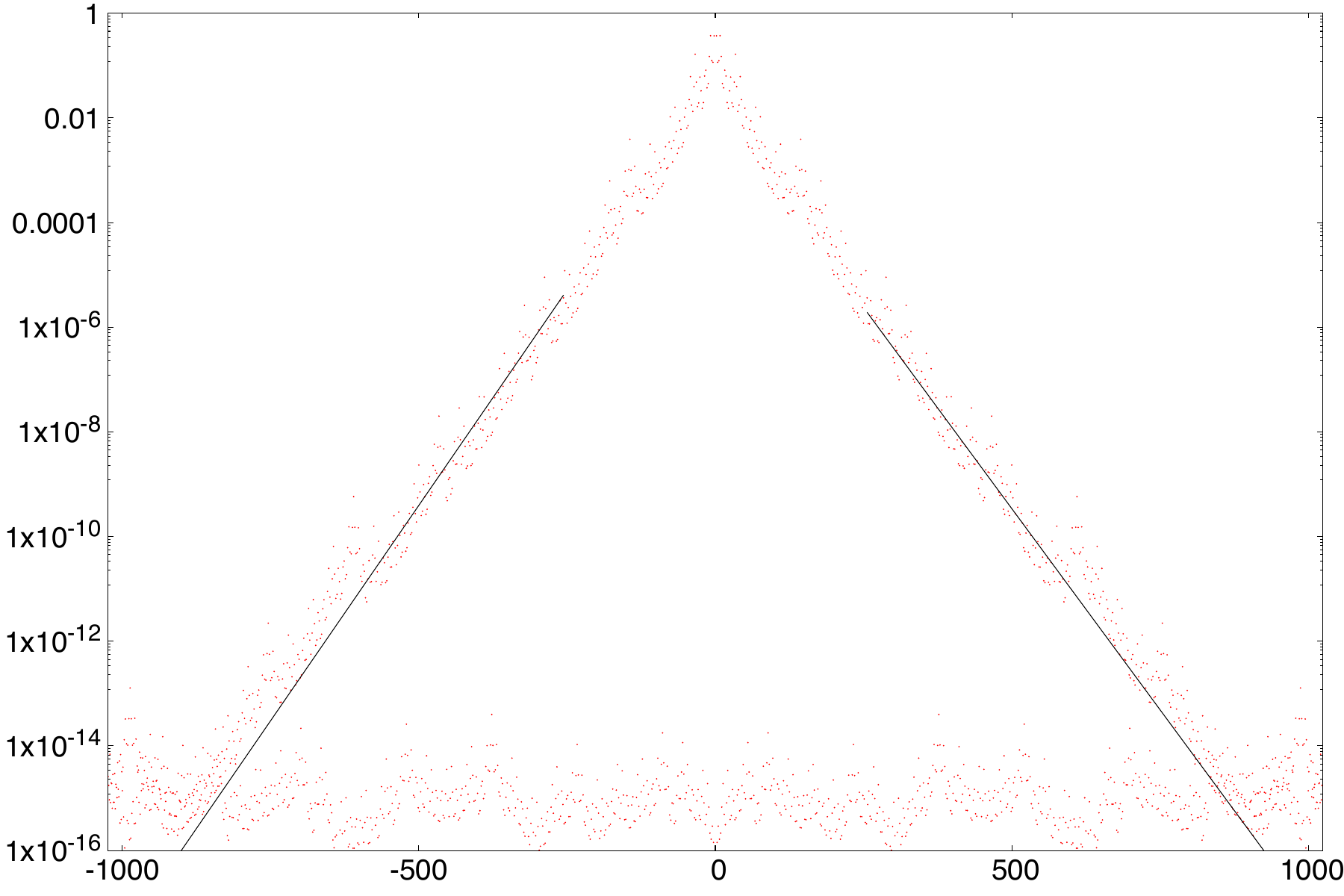}
        \caption{Fourier coefficients and the fitted curve for the unstable subbundle.}
    \end{subfigure}
    \captionsetup{width=\textwidth}
    \caption{Fourier coefficients in a logarithmic scale of the stable subbundle (left) and the unstable subbundle (right) of the torus for $\epsilon = 1.2342$ (red) and a regression line of parameter $\rho^* \approx 0.005$ fitting the coefficients (black).}
    \label{fig:Pfit}
\end{figure}

The first idea here is to choose a $\rho \leq \rho^*$ and a $\hatrho > \rho$ such that $C_N(\rho, \hat\rho)$ becomes as small as possible. Of course we cannot choose any values, as the choice of $\hatrho$ will also have an impact on some objects' norms, just like with $\rho$. Nonetheless, we still require it to be small enough such that the error bounds that are computed afterward are also small enough to guarantee hyperbolicity. Next, we can compute the invariance, reducibility and invertibility error bounds, applying the boxes method as explained in \secref{sec::methodology} when necessary. With such bounds, we can check whether the hyperbolicity condition is satisfied (hence the need for small bounds and values such as $C_N(\rho, \hat{\rho})$), and if so, calculate the hyperbolicity bound $\sigma$. For visualization purposes, we have chosen the same values for the parameters $N$, $\rho$, $\hatrho$, and $R$ to execute the validation for a couple values of $\epsilon$. What can be clearly seen across Tables \ref{tab:eps_0.5} and \ref{tab:eps_1} is that, under the same parameters, all the error bounds increase as the parameter $\epsilon$ increases. This shows how the torus loses invariance and reducibility the closer we get to the critical value of $\epsilon$, which in turn shows also how the torus is losing its hyperbolicity, as $\sigma$ is rapidly increasing, doubling when going just from $\epsilon =0.5$ to $\epsilon=1$. Although not as significant, a similar thing happens to the bound $b(R)$.

\begin{table}[ht]
\centering
\small 
\renewcommand{\arraystretch}{1.2} 
\setlength{\tabcolsep}{3pt} 
\begin{tabular}{|c|c|}
\hline
\textbf{Term} & \textbf{Bound} \\
\hline
$C_N(\rho, \hat{\rho})$ & $9.1889223103814460684374498900346 \times 10^{-8}$ \\
$\NORM{E}$ & $\ep = 1.2828980268004448607526103424455 \times 10^{-7}$ \\
$\NORM{E_{\text{red}}}$ & $\ep_1 = 9.9384120231971379614823261022261 \times 10^{-7}$ \\
$\NORM{E_{\text{inv}}}$ & $\ep_2 = 2.6492816206529570277935091994725 \times 10^{-7}$ \\
$\NORM{(\calM - \calI)^{-1}}$ & $\sigma = 3.5047863969274353212557286917070$ \\
$b(r)$ & $b(R) = 6.2726860980665081720689004716316$ \\
$\NORM{P_1}$ & $1.5338805731759423288761342467093$ \\
$\NORM{P_2}$ & $1.4687982979744499328293125411365$ \\
$\NORM{\Lambda_s}$ & $\lambda_s = 0.357175$ \\
$\NORM{\Lambda_u}$ & $\lambda_u^{-1} = 2.79975$ \\
$\NORM{\Lambda}$ & $\lambda = 0.357175$ \\
\hline
\end{tabular}
\captionsetup{width=\textwidth}
\caption{Validation values for $\kappa=1.3$, $\epsilon = 0.5$, $N = 64$, $\rho = 10^{-2}$, $\hatrho = 10^{-1}$, $R= 1.5 \times 10^{-2}$, $r_-= 5.18586 \times 10^{-7}$, and $r_+ = 1.49999 \times 10^{-2}$.}
\label{tab:eps_0.5}
\end{table}

\begin{table}[ht]
\centering
\small 
\renewcommand{\arraystretch}{1.2} 
\setlength{\tabcolsep}{3pt} 
\begin{tabular}{|c|c|}
\hline
\textbf{Term} & \textbf{Bound} \\
\hline
$C_N(\rho, \hat{\rho})$ & $9.1889223103814460684374498900346 \times 10^{-8}$ \\
$\NORM{E}$ & $\ep = 6.9886143393896989222160867344917 \times 10^{-7}$ \\
$\NORM{E_{\text{red}}}$ & $\ep_1 = 3.2633447245349282716855911598803 \times 10^{-4}$ \\
$\NORM{E_{\text{inv}}}$ & $\ep_2 = 1.0925073901570803841407006747087 \times 10^{-5}$ \\
$\NORM{(\calM - \calI)^{-1}}$ & $\sigma = 7.6699450817858685652216386803508$ \\
$b(r)$ & $b(R) = 8.8552159445544865852315598397991$ \\
$\NORM{P_1}$ & $2.0186640836165030040458099350296$ \\
$\NORM{P_2}$ & $2.1000421847920570152253401199118$ \\
$\NORM{\Lambda_s}$ & $\lambda_s = 0.44695$ \\
$\NORM{\Lambda_u}$ & $\lambda_u^{-1} = 2.23739$ \\
$\NORM{\Lambda}$ & $\lambda = 0.44695$ \\
\hline
\end{tabular}
\captionsetup{width=\textwidth}
\caption{Validation values for $\kappa=1.3$, $\epsilon = 1$, $N = 64$, $\rho = 10^{-2}$, $\hatrho = 10^{-1}$, $R= 1.5 \times 10^{-2}$, $r_-= 5.40288 \times 10^{-6}$, and $r_+ = 1.47234 \times 10^{-2}$.}
\label{tab:eps_1}
\end{table}

It is worth mentioning that although the validation is successful for such cases, the value for $r_-$ gets  bigger the bigger the $\epsilon$. This is because, since our estimates are getting worse, the capacity to ensure uniqueness of a truly invariant torus gets impaired, and therefore the radius of the tube centered in the invariant torus for which we can ensure uniqueness gets smaller (which relates to $r_-$ getting bigger, as the radii at which the Banach fixed-point theorem ensures uniqueness).

What is also interesting is the increasing of the values of the norms of $P_1$ and $P_2$ when parameter $\epsilon$ increases. Recall that, in this case, the first column of $P_1$ represents the direction of the stable subbundle, and the second column represents the direction of the unstable subbundle. This increase means that, although the validated torus is still hyperbolic for such values of $\epsilon$, as the parameter gets closer to the critical value, the fibers show more pronounced changes and get very close at some points (see Figures \ref{fig:1b}, \ref{fig:1d}, \ref{fig:1f}, and \ref{fig:1h}). Therefore, in order to bring each point of the torus of each fiber to the same coordinate system, we will require a less regular change of variables, represented by the matrix $P_1$, which implies that the norms of $P_1$ and $P_2$ get progressively larger. This can become problematic as some of the bounds (such as the reducibility and invertibility error bounds) depend on the norms of $P_1$ and $P_2$. One would be inclined to think that reducing $\hatrho$ would be a solution to this problem, since evaluating the maps $P_1$ and $P_2$ over a narrower band would yield a smaller supremum. However, reducing $\hatrho$ comes with a cost, and that is that $\rho - \hatrho$ becomes smaller and therefore the controlling term $C_N(\rho, \hat{\rho})$ gets bigger. This would still cause the bound to remain big, and that is why the validation gets more intricate the closer we get to the critical value $\epsilon_c$.

\begin{table}[ht]
\centering
\small 
\renewcommand{\arraystretch}{1.2} 
\setlength{\tabcolsep}{3pt} 
\begin{tabular}{|c|c|}
\hline
\textbf{Term} & \textbf{Bound} \\
\hline
$C_N(\rho, \hat{\rho})$ & $9.8707312171782026833836734540706 \times 10^{-8}$ \\
$\NORM{E}$ & $\ep = 2.9326180147973081272937376851241 \times 10^{-7}$ \\
$\NORM{E_{\text{red}}}$ & $\ep_1 = 2.0734245146625048668499036868668 \times 10^{-4}$ \\
$\NORM{E_{\text{inv}}}$ & $\ep_2 = 5.6118904929900122124863469654664 \times 10^{-5}$ \\
$\NORM{(\calM - \calI)^{-1}}$ & $\sigma = 3.6213041171848072135862902799712 \times 10^{2}$ \\
$b(r)$ & $b(R) = 8.4382948067235647008416493688900$ \\
$\NORM{P_1}$ & $1.0471274358468112823731630281721 \times 10$ \\
$\NORM{P_2}$ & $1.1319070929210905461944029023272 \times 10$ \\
$\NORM{\Lambda_s}$ & $\lambda_s = 0.672437$ \\
$\NORM{\Lambda_u}$ & $\lambda_u^{-1} = 1.48713$ \\
$\NORM{\Lambda}$ & $\lambda = 0.672437$ \\
\hline
\end{tabular}
\captionsetup{width=\textwidth}
\caption{Validation values for $\kappa=1.3$, $\epsilon = 1.2342$, $N = 2048$, $\rho = 7 \times 10^{-4}$, $\hatrho = 4 \times 10^{-3}$, $R= 1.5 \times 10^{-2}$, $r_-= 1.33381 \times 10^{-4}$, and $r_+ = 3.27249 \times 10^{-4}$.}
\label{tab:eps_1.2342}
\end{table}

It is at this point that we have to start playing around a bit with $N$, $\rho$, and $\hatrho$ to satisfy the validation conditions, such as reducing $\hatrho$ enough such that the norms of the $P$ matrices are small enough but not too much such that  $C_N(\rho, \hat{\rho})$ gets too big. For that, it is also helpful to increase the amount of Fourier modes $N$ and play around with the other parameters. This can be done in two different ways; first, our input can already be processed in such a way that the grid size is already bigger accounting for this situation, and second, we can manually increase $N$ by increasing the tail of the Fourier coefficients, that is, filling with zeroes. Having a larger $N$ allows for more generous values of $\rho$ and $\hatrho$, which permits a choice of $\rho$ that is closer to the fitting value $\rho^*$.

This behavior can be observed in Table \ref{tab:eps_1.2342}. The parameter values used in Tables \ref{tab:eps_0.5} and \ref{tab:eps_1} will not work anymore for $\epsilon = 1.2342$, so we have to fine tune them. The first thing is to increase the amount of Fourier modes to $2048$, and then find $\rho$ and $\hatrho$ accordingly. With these values, it can be seen that although the validation is successful, the hyperbolicity bound $\sigma$ is substantially bigger than for other values of $\epsilon$, essentially because the norms of the change of variables $P_1$ and $P_2$ increase, while the rates of contraction and expansion remain relatively far from $1$. This is the signature of the bundle collapse leading to the destruction
of a saddle-type invariant torus described in \cite{haroBreakdown,haroTrilogy} (see \cite{CallejaFigueras2012, CanadellHaro2017b, JalnineOsbaldestin2005} for other scenarios).

The code used to perform the proofs is available at \cite{Code} and uses the interval arithmetic package MPFI \cite{mpfi}. The plots have been generated by gnuplot using the outputs of the same code.

\subsection*{Acknowledgements}

The authors are grateful to Jordi-Llu\'is Figueras for fruitful discussions.

\bibliographystyle{plain}
\bibliography{references}

\end{document}